\def\int{\displaystyle\!int}
\def\lim{\displaystyle\!lim}
\def\sum{\displaystyle\!sum}
\def\sup{\displaystyle\!sup}
\def\inf{\displaystyle\!inf}
\def\cap{\displaystyle\!cap}
\def\max{\displaystyle\!max}
\def\min{\displaystyle\!min}
\let\oldsection\section
\renewcommand\section{\setcounter{equation}{0}\oldsection}
\newtheorem{theorem}{Theorem}[section]
\newtheorem{lemma}[theorem]{Lemma}
\theoremstyle{definition}
\theoremstyle{remark}
\begin{document}

\title{Vanishing viscosity limit for compressible magnetohydrodynamics equations with transverse background magnetic field}

\author{Xiufang Cui\footnote{School of Mathematical Sciences, Shanghai Jiao Tong University, Shanghai 200240, P. R. China. Email: cuixiufang@sjtu.edu.cn}
\and
Shengxin Li\footnote{School of Mathematical Sciences, Shanghai Jiao Tong University, Shanghai 200240, P. R. China. Email: lishengxin@sjtu.edu.cn}
\and
Feng Xie
\footnote{School of Mathematical Sciences, CMA-Shanghai, and MOE-LSC, Shanghai Jiao Tong University, Shanghai 200240, P. R. China.
Email: tzxief@sjtu.edu.cn}
}
\date{}
\maketitle

\noindent{\bf Abstract:}
 We are concerned with the uniform regularity estimates and vanishing viscosity limit of solution to two dimensional viscous compressible magnetohydrodynamics (MHD) equations with transverse background magnetic field. When the magnetic field is assumed to be transverse to the boundary and the tangential component of magnetic field satisfies zero Neumann boundary condition, even though the velocity is imposed the no-slip boundary condition, the uniform regularity estimates of solution and its derivatives still can be achieved in suitable conormal Sobolev spaces in the half plane $\mathbb{R}^2_+$, and then the vanishing viscosity limit is justified in $L^\infty$ sense based on these uniform regularity estimates and some compactness arguments. At the same time, together with \cite{CLX21}, our results show that the transverse background magnetic field can prevent the strong boundary layer from occurring for compressible magnetohydrodynamics whether there is magnetic diffusion or not.
\vskip0.2cm

%
%
\noindent {\bf Keywords:}: Compressible MHD equations; Transverse background magnetic field; Uniform regularity estimates; Vanishing viscosity limit;  Conormal Sobolev space
\maketitle





\section{Introduction}In this paper we consider the vanishing viscosity limit of solution to
two dimensional viscous compressible  magnetohydrodynamics (MHD) equations in the half plane $\mathbb{R}_+^2:=\{(x,y)|x\in\mathbb{R},\ y\geq 0\}$:
\begin{align}\label{1.1}
\begin{cases}
\partial_t\rho^\varepsilon+\nabla\cdot(\rho^\varepsilon {\bf{v}}^\varepsilon)=0,\\
\partial_t(\rho^\varepsilon {\bf{v}}^\varepsilon)+\mathrm{div}(\rho^\varepsilon {\bf{v}}^\varepsilon\otimes {\bf{v}}^\varepsilon)-\varepsilon\mu\Delta {\bf{v}}^\varepsilon-\varepsilon(\mu+\lambda)\nabla(\nabla\cdot {\bf{v}}^\varepsilon)+\nabla p ^{\varepsilon}=(\nabla\times {\bf{B}}^\varepsilon)\times {\bf{B}}^\varepsilon,\\
\partial_t{\bf{B}}^\varepsilon-\nabla\times({\bf{v}}^\varepsilon\times {\bf{B}}^\varepsilon)=\varepsilon\kappa \Delta {\bf{B}}^\varepsilon,\quad \mathrm{div}\  {\bf{B}}^\varepsilon=0,
\end{cases}
\end{align}	
where $\rho^\varepsilon$ is the density, ${\bf{v}}^\varepsilon=(v_1^\varepsilon, v_2^\varepsilon)$ denotes the velocity and ${\bf{B}}^\varepsilon=(b_1^\varepsilon, b_2^\varepsilon)$ stands for the magnetic field.  The viscosity coefficients  $\varepsilon \mu,$  $\varepsilon\lambda$  and the magnetic diffusion coefficient $\varepsilon\kappa$ are assumed to be the same order in term of a small parameter $\varepsilon$ with $\mu>0$ and $\mu+\lambda>0$. The operators $\nabla=(\partial_x, \partial_y)$ and $\Delta=\partial_x^2+\partial_y^2$. The pressure $p^\varepsilon$ is a function of $\rho^\varepsilon$,  which takes the following form:
\begin{align}\label{1.2}
p^\varepsilon=(\rho^\varepsilon)^\gamma,\quad \gamma\geq1,
\end{align}
where $\gamma$ is the adiabatic constant. The initial data is given by
\begin{align*}
(\rho^\varepsilon, {\bf{v}}^\varepsilon, {\bf{B}}^\varepsilon)(t, x, y)|_{t=0}=(\rho^\varepsilon_0, {\bf{v}}^\varepsilon_0,  {\bf{B}}^\varepsilon_0)(x, y).
\end{align*}	
The no-slip boundary condition is imposed on the velocity field:
\begin{align}\label{1.3}
{\bf{v}}^\varepsilon|_{y=0}=0.
\end{align}	
The main goal of this paper is to analyze the effect of transverse background magnetic field on the vanishing viscosity limit process of solution to (\ref{1.1})-(\ref{1.3}). Consequently, we impose the following boundary conditions on magnetic field.
\begin{align}\label{1.4}
\partial_y b_1^\varepsilon|_{y=0}=0,\quad  b_2^\varepsilon|_{y=0}=1.
\end{align}

As is well-known that the system of magnetohydrodynamics (MHD) equations is an important model in plasma physics, and also attracts many attentions from mathematicians. Extensive works exist for the study of compressible MHD equations \cite{CW02, CW03, DF06, L15, HT05, Wang03} and incompressible MHD equations  \cite{AZ17, CRW13, CW11, LZ14, LXZ15, ST83}.

The inviscid limit problem is also an important but challenging problem in both hydrodynamics and applied mathematics, see \cite{AD04, CP10, CPW95, HWY12, RWX14, RXZ16}. Particularly,  when the inviscid limit process is considered in a domain with boundaries, it becomes much more challenging due to the possible presence of strong boundary layers \cite{Ole, prandtl, SH60}.

However, when both velocity and magnetic field are imposed the Navier-slip boundary conditions, the strong boundary layer usually disappears. Thus, under this kind of slip boundary conditions, it is reasonable to justify the inviscid limit of solution to incompressible MHD system directly without studying the boundary layers, see \cite{DXX20, WZ17,  XXW09}.
But, when the velocity is given the no-slip boundary condition, in general the strong boundary layer always occurs. At least, it is the case for the Navier-Stokes equations \cite{GGW,LWXY,O,Ole,prandtl,WXY}. Consequently, due to the  appearance of strong boundary layer,  the inviscid limit in $L^\infty$ sense becomes dramatically difficult as the viscosity coefficient goes to zero. And the essential difficulty is down to uncontrollability of the vorticity of boundary layer.
Recently, although the velocity is imposed the no-slip boundary condition, Liu, the third author and Yang not only established the well-posedness of solution to MHD boundary layer equations but also proved the validity of Prandtl boundary layer expansion in the Sobolev spaces under the condition that the tangential component of magnetic filed does not degenerate near the physical boundary initially in \cite{LXY191, LXY192}. Where the tangential component of magnetic field plays a key role in the stability of boundary layers and vanishing viscosity limit process.  Thereafter, under the no-slip boundary condition on velocity, Wang and the third author established  the inviscid limit  result for two dimensional compressible viscoelastic equations in the half plane in \cite{WX21}.  Similar conclusion was proved for two dimensional compressible non-resistive magnetohydrodynamics equations in \cite{CLX21}. These two results reveal a different phenomenon that both non-degeneracy deformation tensor and transverse background magnetic field can prevent the strong boundary layer formation. It is noted that the magnetic diffusion term is included in (\ref{1.1}) compared with \cite{CLX21}.

The main task of this paper is to prove that the solution to viscous MHD equations \eqref{1.1}-\eqref{1.4} converge to the solutions to the following ideal MHD equations as the small parameter $\varepsilon $ goes to zero.
\begin{align} \label{1.5}
\begin{cases}
\partial_t\rho^0+\nabla\cdot(\rho^0 {\bf{v}}^0)=0,\\
\partial_t(\rho^0 {\bf{v}}^0)+\mathrm{div}(\rho^0 {\bf{v}}^0\otimes {\bf{v}}^0) +\nabla p^0=(\nabla\times {\bf{B}}^0)\times {\bf{B}}^0,\\
\partial_t{\bf{B}}^0-\nabla\times({\bf{v}}^0\times {\bf{B}}^0)=0,\quad \mathrm{div}\ {\bf{B}}^0=0,
\end{cases}
\end{align}
where ${\bf{v}}^0=(v_1^0, v_2^0)$ is the velocity and ${\bf{B}}^0=(b_1^0, b_2^0)$ denotes the magnetic field.

 To formulate the main results, we  introduce the following conormal derivative operators of functions depending on $(t,\bf{x})$:
\begin{align*}
\mathcal{Z}_0=\partial_t,\quad \mathcal{Z}_1=\partial_x,\quad \mathcal{Z}_2=\phi(y)\partial_y, \quad \mathcal{Z}^\alpha=\mathcal{Z}_0^{\alpha_0}\mathcal{Z}_1^{\alpha_1}\mathcal{Z}_2^{\alpha_2},
\end{align*}
where  the spatial variables ${\bf{x}}=(x,  y)$, the multi-index $\alpha=(\alpha_0, \alpha_1, \alpha_2)$ and  $|\alpha|=\alpha_0+\alpha_1+\alpha_2$.  The smooth and bounded function  $\phi(y)$   satisfying $\phi(y)|_{y=0}=0$ and $\phi'|_{y=0}>0$, typically,  we can  choose $\phi(y)=\frac{y}{1+y}$.
	
 For any integer $m\in \mathbb{N}$, we denote the conormal Sobolev space
 \begin{align*}
 H_{co}^m([0, T] \times \mathbb{R}_+^2)=\{f(t, {\bf{x}}): \mathcal{Z}^\alpha f\in L^2([0, T]\times \mathbb{R}_+^2), \quad |\alpha|\leq m\}.
 \end{align*}
For any $t\geq 0$, we set the norms
 \begin{align*}
 \|f(t)\|_{m}^2=\sum_{|\alpha|\leq m}\|\mathcal{Z}^\alpha f(t,\cdot)\|_{L_x^2L_y^2}^2,
 \end{align*}
and
 \begin{align*}
 \|f\|_{H_{co}^m}^2=\int_0^t\|f(s)\|_{m}^2 ds.
 \end{align*}

As usual we use the notation
 \begin{align*}
 W_{co}^{m, \infty}([0, T] \times \mathbb{R}_+^2)=\{f(t, {\bf{x}}): \mathcal{Z}^\alpha f\in L^\infty([0, T]\times \mathbb{R}_+^2), \quad |\alpha|\leq m\},
 \end{align*}
 and
 \begin{align*}
 \|f\|_{m, \infty}=\sum_{|\alpha|\leq m}\|\mathcal{Z}^\alpha f \|_{L_{t, {\bf{x}}}^\infty}.
 \end{align*}

 It is also convenient to introduce the functional setting
 \begin{align*}
 \Lambda^m(t)=\{(\rho, {\bf{v}}, {\bf B}): \partial_y^i(\rho-1, {\bf{v}}, {\bf B}-\overset{\rightarrow}{e_y})\in H_{co}^{m-i},  i=0, 1 \}
 \end{align*}
 with $\overset{\rightarrow}{e_y}=(0, 1)$.

To derive the uniform conormal estimates of the classical solution  $(\rho^\varepsilon, {\bf{v}}^\varepsilon, {\bf{B}}^\varepsilon)$ to compressible MHD equations \eqref{1.1}-\eqref{1.4},  we introduce the  following energy functional:
\begin{align*}
N_m(t)=&\sum_{|\alpha|\leq m}\sup_{0\leq s\leq t}\int_{\mathbb{R}_+^2}\left(\rho^\varepsilon(s)|\mathcal{Z}^\alpha {\bf{v}}^\varepsilon(s)|^2+|\mathcal{Z}^\alpha ({\bf{B}}^\varepsilon-\overset{\rightarrow}{e_y})(s)|^2+\gamma^{-1}(p^\varepsilon)^{-1}|\mathcal{Z}^\alpha (p^\varepsilon(s)-1)|^2\right) d{\bf{x}} \\
&+ \|\partial_y({\bf{v}}^\varepsilon, b_1^\varepsilon, p^\varepsilon) \|_{H_{co}^{m-1}}^2
+\|\partial_y^2 v_2^\varepsilon \|_{H_{co}^{m-2}}^2+\varepsilon\mu \|\nabla {\bf{v}}^\varepsilon\|_{H_{co}^m}^2   +\varepsilon(\mu+\lambda) \|\nabla\cdot {\bf{v}}^\varepsilon\|_{H_{co}^m}^2\\
&+\varepsilon\kappa \|\nabla  {\bf{B}}^\varepsilon\|_{H_{co}^m}^2+\varepsilon^2\mu^2\kappa\|\partial_y^2 v_1^\varepsilon\|_{H_{co}^{m-1}}^2 +\varepsilon^2(2\mu+\lambda)^2 \|\partial_y^2 v_2^\varepsilon\|_{H_{co}^{m-1}}^2  +\varepsilon^2\kappa^2\mu \|\partial_y^2 b_1^\varepsilon\|_{H_{co}^{m-1}}^2.
\end{align*}
		
Now, it is position to state the main results of this paper.
\begin{theorem}\label{Th1}
(Uniform regularity estimates and inviscid limit) Let  the integer $m\geq 9$. Suppose the initial data $ (\rho^\varepsilon_0, \bf{v}^\varepsilon_0, \bf{B}^\varepsilon_0)$ satisfies
\begin{align}\label{1.6}
\sum_{i=0}^1\|\partial_y^i(p_0^\varepsilon-1, {\bf{v}}_0^\varepsilon, {\bf{B}}_0^\varepsilon-\overset{\rightarrow}{e_y})\|_{m-i}^2 \leq\sigma,
\end{align}
where $\sigma>0$ is some sufficiently small  constant. Then for the classical solution $U^\varepsilon=(\rho^\varepsilon, {\bf{v}}^\varepsilon, {\bf{B}}^\varepsilon)\in \Lambda^m(T)$ to the initial boundary value problem of viscous  compressible MHD equations \eqref{1.1}-\eqref{1.4}, there exists  a time $T>0$  independent  of $\varepsilon$, such that for any   $t\in [0, T]$,  the following regularity estimate holds:
\begin{align}
\label{1.7}
N_m(t)+ &\gamma^{-1}\varepsilon(2\mu+\lambda)\sum_{|\alpha|+i\leq m\atop i=1,2}\int_{\mathbb{R}_+^2}
(p^\varepsilon)^{-1}(t)|\mathcal{Z}^\alpha \partial_y^i p^\varepsilon(t)|^2  d{\bf{x}}
\le C\sigma
\end{align}
where $C>0$ is  some   constant, which is independent of $\varepsilon$.
	 	
Moreover, there exists a  unique solution   $U^0=(\rho^0, {\bf{v}}^0, {\bf{B}}^0)\in \Lambda^m(T)$  to the ideal compressible MHD equations \eqref{1.5}, such that
\begin{align*}
 \lim_{\varepsilon\rightarrow 0}\sup_{t\in[0, T]}\|(U^{\varepsilon}-U^0)(t, \cdot)\|_{L^\infty(\mathbb{R}_+^2)}=0.
\end{align*}
\end{theorem}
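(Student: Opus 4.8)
The plan is to prove Theorem~\ref{Th1} in three stages: (i) establish the uniform-in-$\varepsilon$ conormal energy estimate \eqref{1.7} via a continuation/bootstrap argument on $N_m(t)$; (ii) pass to the limit $\varepsilon\to 0$ using compactness; (iii) identify the limit as the unique solution of \eqref{1.5} and upgrade the convergence to $L^\infty$. For stage (i), I would first assume a priori that $N_m(t)\le 2C_0\sigma$ on some interval $[0,T]$ with $\sigma$ small, so that $\rho^\varepsilon$ and $p^\varepsilon$ stay close to $1$ (in particular bounded above and below) and the system is uniformly parabolic-hyperbolic. Applying $\mathcal{Z}^\alpha$ to the equations \eqref{1.1} for $|\alpha|\le m$, one obtains transport-type equations for $\mathcal{Z}^\alpha{\bf v}^\varepsilon$, $\mathcal{Z}^\alpha({\bf B}^\varepsilon-\vec e_y)$, $\mathcal{Z}^\alpha(p^\varepsilon-1)$ with commutators and with the viscous/diffusive terms producing the good dissipation $\varepsilon\mu\|\nabla{\bf v}^\varepsilon\|_{H_{co}^m}^2 + \varepsilon\kappa\|\nabla{\bf B}^\varepsilon\|_{H_{co}^m}^2$. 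The symmetrizing weight $\gamma^{-1}(p^\varepsilon)^{-1}$ in $N_m$ is exactly what makes the pressure--velocity coupling skew-symmetric, while the Lorentz force $(\nabla\times{\bf B}^\varepsilon)\times{\bf B}^\varepsilon$ and the induction nonlinearity $\nabla\times({\bf v}^\varepsilon\times{\bf B}^\varepsilon)$ cancel in the combined ${\bf v}$--${\bf B}$ energy up to lower-order commutators, as is standard for MHD. The key structural point — and where the transverse field is used — is that with $b_2^\varepsilon|_{y=0}=1$ and $\partial_y b_1^\varepsilon|_{y=0}=0$, the magnetic field is non-tangential at the boundary, so one can recover normal derivatives: from $\mathrm{div}\,{\bf B}^\varepsilon=0$ one gets $\partial_y b_2^\varepsilon=-\partial_x b_1^\varepsilon$, and the induction equation lets one express $\partial_y b_1^\varepsilon$ (hence $\partial_y{\bf v}^\varepsilon$, via the momentum equation and the elliptic structure in the normal variable) in terms of conormal quantities. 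This is how one closes the estimates for the non-conormal pieces $\|\partial_y({\bf v}^\varepsilon,b_1^\varepsilon,p^\varepsilon)\|_{H_{co}^{m-1}}^2$, $\|\partial_y^2 v_2^\varepsilon\|_{H_{co}^{m-2}}^2$, and the $\varepsilon^2$-weighted second-normal-derivative terms in $N_m$, without ever controlling the vorticity of a strong boundary layer.

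The most delicate estimates are for the normal derivatives near $y=0$. Concretely, one differentiates the momentum equation to write $\varepsilon(2\mu+\lambda)\partial_y^2 v_2^\varepsilon$ and $\varepsilon\mu\partial_y^2 v_1^\varepsilon$ as lower-order terms plus the pressure gradient plus the Lorentz force; testing against suitable weighted quantities and using the boundary conditions \eqref{1.3}, \eqref{1.4} produces the $\varepsilon^2$-weighted bounds. Crucially, the extra line in \eqref{1.7} controlling $\gamma^{-1}\varepsilon(2\mu+\lambda)\sum_{|\alpha|+i\le m, i=1,2}\int (p^\varepsilon)^{-1}|\mathcal{Z}^\alpha\partial_y^i p^\varepsilon|^2$ is obtained from the continuity equation rewritten as an equation for $p^\varepsilon$: $\partial_t p^\varepsilon + {\bf v}^\varepsilon\cdot\nabla p^\varepsilon + \gamma p^\varepsilon\,\mathrm{div}\,{\bf v}^\varepsilon = 0$, so $\nabla\cdot{\bf v}^\varepsilon$ controls $\partial_t p^\varepsilon$ and, after using the momentum equation to trade a normal pressure derivative against $\varepsilon\partial_y^2{\bf v}^\varepsilon$ and the Lorentz force, one gains the $\varepsilon$-weighted normal-derivative control on the pressure. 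I expect the main obstacle to be the bookkeeping of commutators $[\mathcal{Z}^\alpha, \text{coefficients}]$ and $[\mathcal{Z}^\alpha, \partial_y^2]$ together with the careful use of anisotropic/conormal Sobolev embeddings (Gagliardo--Nirenberg in the half-space adapted to $\mathcal{Z}$) to bound all nonlinear terms by $C(N_m)^{3/2}$ or $\sqrt{\varepsilon}\,C(N_m)$-type quantities, so that Gr\"onwall closes the bootstrap on a time $T$ depending only on $\sigma$ and $m$, not on $\varepsilon$; the threshold $m\ge 9$ is what guarantees enough conormal regularity for these embeddings ($L^\infty$ control of first derivatives, etc.).

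For stage (ii)–(iii), the uniform bound \eqref{1.7} gives, along a subsequence, weak-$*$ limits $U^\varepsilon\rightharpoonup U^0$ in the relevant conormal spaces. Uniform bounds on $\partial_t U^\varepsilon$ (read off from the equations and \eqref{1.7}) plus the spatial conormal regularity yield, via the Aubin--Lions--Simon lemma, strong convergence in $C([0,T];H_{co}^{m-1}_{loc})$ and hence a.e. convergence; this suffices to pass to the limit in all the nonlinear terms and to check that the viscous and diffusive terms $\varepsilon\mu\Delta{\bf v}^\varepsilon$, $\varepsilon\kappa\Delta{\bf B}^\varepsilon$ vanish (their $H_{co}^m$ norms are $O(\sqrt\varepsilon)$ by \eqref{1.7}), so $U^0$ solves \eqref{1.5} with $U^0\in\Lambda^m(T)$; uniqueness for \eqref{1.5} in this class follows from a standard energy estimate on the difference of two solutions. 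Finally, to get $\lim_{\varepsilon\to0}\sup_{[0,T]}\|U^\varepsilon-U^0\|_{L^\infty(\mathbb{R}_+^2)}=0$: the uniform $H_{co}^m$ control with $m\ge 9$ gives, by the anisotropic embedding $\|f\|_{L^\infty}\lesssim \|f\|_{m'}$ for $m'$ around $3$–$4$ (using $\partial_y f\in H_{co}^{m-1}$ to convert conormal into full regularity in the normal direction), uniform $W_{co}^{2,\infty}$-type bounds; combined with the strong $L^2$-type convergence $U^\varepsilon\to U^0$ these interpolate to uniform convergence. The role of the boundary conditions here is that they are consistent — ${\bf v}^0|_{y=0}$ need only satisfy $v_2^0|_{y=0}=0$ (compatible with $\mathrm{div}\,{\bf B}^0=0$ and $b_2^0|_{y=0}=1$), and the no-slip condition \eqref{1.3} is \emph{not} preserved in the limit, but the uniform estimates show the discrepancy lives in an $O(\sqrt\varepsilon)$-small (in the appropriate norm) corrector rather than a strong $O(1)$ boundary layer, which is the whole point of the transverse-field mechanism.
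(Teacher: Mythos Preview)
Your proposal follows essentially the same architecture as the paper's proof: conormal energy estimates (Section~3) plus separate normal-derivative estimates (Section~4), combined into a polynomial inequality of the form $N_m(t)\lesssim \mathcal{P}(N_m(0))+[t+\varepsilon]\mathcal{P}(N_m(t))$ which closes for $t,\varepsilon$ small, and then a compactness argument for the limit (the paper simply cites \cite{Mas-Rou1} for this last step, but your Aubin--Lions--Simon plus interpolation sketch is the standard way to unpack that reference).

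There is, however, one technical point you gloss over that is the genuine novelty of this paper relative to the non-resistive case \cite{CLX21}, and your outline gets the mechanism slightly backwards. You write that ``the induction equation lets one express $\partial_y b_1^\varepsilon$ \ldots\ in terms of conormal quantities.'' In fact it is the \emph{other} way: the induction equation for $b_1$ (equation \eqref{4.1}) gives $\partial_y v_1 + \varepsilon\kappa\,\partial_y^2 b_1 = (\text{conormal})$, while the $v_1$-momentum equation (equation \eqref{change 31}) gives $\partial_y b_1 + \varepsilon\mu\,\partial_y^2 v_1 = (\text{conormal})$. Neither identity alone lets you estimate $\partial_y v_1$ or $\partial_y b_1$, because each carries an uncontrolled second normal derivative of the \emph{other} unknown. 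The paper's device is to take the $L^2$ norm of the first identity with weight $\mu^{1/2}$ and of the second with weight $\kappa^{1/2}$, then \emph{add}: the two cross terms $2\varepsilon\kappa\mu\int \mathcal{Z}^\alpha\partial_y v_1\cdot\mathcal{Z}^\alpha\partial_y^2 b_1$ and $2\varepsilon\mu\kappa\int \mathcal{Z}^\alpha\partial_y^2 v_1\cdot\mathcal{Z}^\alpha\partial_y b_1$ become, after integrating by parts using the Neumann condition $\partial_y b_1|_{y=0}=0$, the same quantity with opposite signs and cancel up to commutators (see \eqref{4.2}--\eqref{4.6}). This coupled estimate is precisely why $N_m$ contains the mixed $\varepsilon^2\mu^2\kappa\|\partial_y^2 v_1\|^2$ and $\varepsilon^2\kappa^2\mu\|\partial_y^2 b_1\|^2$ terms, and also why---unlike in \cite{CLX21}---one cannot iterate to higher normal derivatives. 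Your sentence ``testing against suitable weighted quantities and using the boundary conditions'' is in the right spirit, but the actual weights and the cancellation are not something one would stumble on without seeing the coupled structure; I would make this explicit in your write-up.
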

 Before proceeding, let us explain the difficulty and related strategy for the proof of main theorem. Compared to the previous work \cite{CLX21}, the presence of magnetic diffusion term $\varepsilon\kappa \Delta {\bf{B}}^\varepsilon$ in \eqref{1.1} will produce more mixed terms of higher-order derivative when we derive the conormal estimates of $\partial_y v_1$ and $\partial_y b_1$, which makes the analysis in this paper different from the arguments in \cite{CLX21}. Moreover, the conormal estimates for $\partial_y v_1$ and $\partial_y b_1$ should be estimated together by using the equations of $b_1$ and $v_1$ here. And the mixed terms appearing in the left hand sides of (\ref{4.2}) and (\ref{4.6}) are cancelled by using the boundary condition of $\partial_yb_1|_{y=0}=0$.

 Here, it should be emphasized that the uniform estimates of the first order normal derivative of $\partial_y({\bf{v}}^\varepsilon, b_1^\varepsilon, p^\varepsilon)$ and only the second order normal derivative of $\partial_y^2 v_2^\varepsilon$ are achieved in (\ref{1.7}). However, both the first order normal derivative of $\partial_y({\bf{v}}^\varepsilon, {\bf{B}}^\varepsilon, p^\varepsilon)$ and the second order normal derivative of $\partial_y^2({\bf{v}}^\varepsilon, {\bf{B}}^\varepsilon, p^\varepsilon)$ were obtained in \cite{CLX21}. In fact, we believe that the uniform estimates of high order normal derivative of $\partial_y^i({\bf{v}}^\varepsilon, {\bf{B}}^\varepsilon, p^\varepsilon)\ (i\geq 3)$ still can be derived in \cite{CLX21}, where there is no magnetic diffusion term of $\varepsilon\kappa \Delta {\bf{B}}^\varepsilon$ in \eqref{1.1}, provided that the high order compatibility conditions are assumed there. But, if there exists a diffusion term of $\varepsilon\kappa \Delta {\bf{B}}^\varepsilon$ in \eqref{1.1}, it seems that it is impossible to derive the uniform estimates of the high order normal derivatives, even for the second order normal derivative of $\partial_y^2(v_1^\varepsilon, b_1^\varepsilon, p^\varepsilon)$. As a consequence, we can only use the uniform estimates of  $\|({\bf{v}}^\varepsilon, {\bf{B}}^\varepsilon, p^\varepsilon, \partial_yv_2)\|_{L^\infty}$ to close the a priori energy estimates in this paper, which is truly different from the analysis in \cite{CLX21}. In other words, when there is  a diffusion term of $\varepsilon\kappa \Delta {\bf{B}}^\varepsilon$ in \eqref{1.1}, at this moment we only prove that the $O(1)$ order boundary layer does not appear when the vanishing viscosity of solution to (\ref{1.1})-(\ref{1.4}) is considered.

 In addition, the form of boundary condition \eqref{1.4} is not essential. Precisely, the value $1$ of $b^\varepsilon_2$ on the boundary can be replaced by any given function $f(t,x)$, which satisfies $0< c\leq f(t,x)\leq C$. Then, the results in Theorem \ref{Th1} are still believed to hold true. It is noted that all of these results only depend on the main assumption that the background magnetic field is transverse to the boundary. Finally, In addition to its own significance, this paper can be regarded as a complement of \cite{CLX21}. And both of these two results show that the transverse background magnetic field can prevent the strong boundary layer from formation whether there is magnetic diffusion term of $\varepsilon\kappa \Delta {\bf{B}}^\varepsilon$ in \eqref{1.1} or not.

The paper is organized as follows. In Section 2, we give some elementary lemmas. Section 3 is devoted to  the uniform conormal energy estimates of the classical solution to \eqref{1.1}-\eqref{1.4}.  We establish the conormal estimates for normal derivatives of the classical solution in Section 4. In Section 5, we prove Theorem \ref{Th1} based on the estimates obtained in Section 3 and Section 4.	

In the following parts,  we use  notation $A\lesssim B$  to present $A\leq CB$ for some generic constant $C>0$ independent of $\varepsilon$.
And we denote the polynomial functions by $\mathcal{P}(\cdot)$, which may vary from line to line.  The commutator is expressed by $[\cdot, \cdot]$.

\section{Preliminaries}
In this section,  we present some elementary lemmas that will be used frequently later. The first one is the Sobolev-Gagliardo-Nirenberg-Moser type inequality for the conormal Sobolev space and its proof can be found in \cite{G90}.
\begin{lemma}\label{lem 2.2}
For the functions $f, g \in L^\infty([0,T]\times\mathbb{R}_+^2)\cap H_{co}^m([0,T]\times\mathbb{R}_+^2)$ with $m\in \mathbb{N}$, it holds that for any  $\alpha, \beta\in \mathbb{N}^3$ with $|\alpha|+|\beta|=m$,
\begin{align}\label{2.1}
\int_0^t\|(\mathcal{Z}^\alpha f\mathcal{Z}^\beta g) (s)\|^2 ds\lesssim \|f\|_{L_{t,\bf{x}}^\infty}^2\int_0^t\|g(s)\|_{m}^2 ds+\|g\|_{L_{t,\bf{x}}^\infty}^2\int_0^t\|f(s)\|_{m}^2 ds.
\end{align}
\end{lemma}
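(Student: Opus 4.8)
The plan is to derive \eqref{2.1} from a conormal Gagliardo--Nirenberg interpolation inequality by means of H\"older's and Young's inequalities. Throughout I regard \eqref{2.1} as a product-rule estimate on the three dimensional domain $\Omega_t:=[0,t]\times\mathbb{R}_+^2$ equipped with the vector fields $\mathcal Z_0=\partial_t$, $\mathcal Z_1=\partial_x$, $\mathcal Z_2=\phi(y)\partial_y$, which are mutually commuting derivations, so the Leibniz rule holds exactly; the only genuine boundary is $\{y=0\}$, while the artificial faces $\{s=0\},\{s=t\}$ are removed by extending $f,g$ in the time variable (which preserves the $L^\infty$ norm and all conormal $L^2$ norms up to constants). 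Write $j=|\alpha|$, so $|\beta|=m-j$. When $j=0$ or $j=m$, \eqref{2.1} follows at once from $\|f\,\mathcal Z^\beta g\|_{L^2}\le\|f\|_{L^\infty}\|\mathcal Z^\beta g\|_{L^2}$ and the symmetric bound. When $1\le j\le m-1$, I would invoke the conormal interpolation inequality
\begin{align*}
\|\mathcal Z^\gamma u\|_{L^{2m/|\gamma|}(\Omega_t)}\;\lesssim\;\|u\|_{L^\infty(\Omega_t)}^{\,1-|\gamma|/m}\Big(\sum_{|\delta|\le m}\|\mathcal Z^\delta u\|_{L^2(\Omega_t)}^2\Big)^{|\gamma|/(2m)},\qquad 1\le|\gamma|\le m
\end{align*}
(proved below), combine it with H\"older's inequality (using $\tfrac{j}{2m}+\tfrac{m-j}{2m}=\tfrac12$), and square, to obtain
\begin{align*}
\|\mathcal Z^\alpha f\,\mathcal Z^\beta g\|_{L^2(\Omega_t)}^2\le\|\mathcal Z^\alpha f\|_{L^{2m/j}(\Omega_t)}^2\,\|\mathcal Z^\beta g\|_{L^{2m/(m-j)}(\Omega_t)}^2\lesssim A^{(m-j)/m}\,B^{j/m},
\end{align*}
where $A:=\|f\|_{L^\infty(\Omega_t)}^2\sum_{|\delta|\le m}\|\mathcal Z^\delta g\|_{L^2(\Omega_t)}^2$ and $B:=\|g\|_{L^\infty(\Omega_t)}^2\sum_{|\delta|\le m}\|\mathcal Z^\delta f\|_{L^2(\Omega_t)}^2$. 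Young's inequality with exponents $\tfrac{m}{m-j}$ and $\tfrac{m}{j}$ then bounds $A^{(m-j)/m}B^{j/m}\le A+B$, which is exactly the right hand side of \eqref{2.1} once one recalls $\int_0^t\|g(s)\|_m^2\,ds=\sum_{|\delta|\le m}\|\mathcal Z^\delta g\|_{L^2(\Omega_t)}^2$, the same identity for $f$, and $\|f(s)\|_{L^\infty}\le\|f\|_{L_{t,{\bf x}}^\infty}$.

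It remains to establish the conormal interpolation inequality, which I would prove by induction on $m$ (the case $m=1$ being trivial). Set $a_0:=\|u\|_{L^\infty(\Omega_t)}$ and $a_j:=\big(\sum_{|\gamma|=j}\|\mathcal Z^\gamma u\|_{L^{2m/j}(\Omega_t)}^2\big)^{1/2}$ for $1\le j\le m$, so that $a_m\le\big(\sum_{|\delta|\le m}\|\mathcal Z^\delta u\|_{L^2(\Omega_t)}^2\big)^{1/2}$. The engine is the Landau--Kolmogorov type inequality: for each $Z\in\{\partial_t,\partial_x,\phi\partial_y\}$, each (smooth) $v$, and exponents with $\tfrac2q=\tfrac1{q_1}+\tfrac1{q_2}$,
\begin{align*}
\|Zv\|_{L^q(\Omega_t)}^2\;\lesssim\;\|v\|_{L^{q_1}(\Omega_t)}\Big(\sum_{Z'}\|Z'Zv\|_{L^{q_2}(\Omega_t)}+\|Zv\|_{L^{q_2}(\Omega_t)}\Big),
\end{align*}
obtained by writing $\int_{\Omega_t}|Zv|^q=\int_{\Omega_t}|Zv|^{q-2}(Zv)(Zv)$, integrating by parts in the $Z$ direction, and applying H\"older with the triple $(q_1,\tfrac{q}{q-2},q_2)$ (then dividing by $\|Zv\|_{L^q}^{q-2}$): the boundary term over $\{y=0\}$ vanishes because $\phi(0)=0$, and $\operatorname{div}Z\in\{0,\phi'\}$ with $\phi'\in L^\infty$ produces only the harmless correction $\|Zv\|_{L^{q_2}}$. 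Taking $v=\mathcal Z^{\gamma'}u$ with $|\gamma'|=j-1$, the exponents $(\tfrac1q,\tfrac1{q_1},\tfrac1{q_2})=(\tfrac{j}{2m},\tfrac{j-1}{2m},\tfrac{j+1}{2m})$, and summing over $|\gamma'|=j-1$ — all derivatives that appear then having order $\le m$ since $1\le j\le m-1$ — yields $a_j^2\lesssim a_{j-1}a_{j+1}+(\text{lower order})$ for $1\le j\le m-1$. Finally, the elementary discrete log--convexity lemma (if $b_0,\dots,b_m\ge0$ and $b_j^2\le b_{j-1}b_{j+1}$ for $1\le j\le m-1$, then $b_j\le b_0^{1-j/m}b_m^{j/m}$), applied after the lower order terms are absorbed using the induction hypothesis on $m$ and Young's inequality, gives $a_j\lesssim a_0^{1-j/m}a_m^{j/m}$, which is the desired inequality.

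The heart of the argument is the conormal interpolation inequality; the reduction above, the bookkeeping of Lebesgue exponents in the iteration, and the discrete log--convexity lemma are all routine. The one genuinely structural point is that the integration by parts in the Landau--Kolmogorov step must be carried out uniformly up to the characteristic boundary $\{y=0\}$, where the field $\phi(y)\partial_y$ degenerates; this is precisely where $\phi(0)=0$ (killing the boundary term) and $\phi,\phi'\in L^\infty$ (keeping the divergence correction and all coefficients bounded) enter, and it is what makes conormal derivatives compatible with the half-space geometry. Everything here is classical, and in fact \eqref{2.1} is exactly the statement recorded in \cite{G90}, which one may alternatively just cite.
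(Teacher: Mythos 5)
Your argument is correct, and it is in fact the standard proof of this Moser-type estimate: the paper itself gives no proof of Lemma \ref{lem 2.2} and simply refers to Gu\`es \cite{G90}, whose result is established by exactly the route you take (H\"older with exponents $2m/|\alpha|$, $2m/|\beta|$, a conormal Gagliardo--Nirenberg interpolation inequality proved by a Landau--Kolmogorov iteration adapted to the fields $\partial_t,\partial_x,\phi\partial_y$, then Young's inequality). The two points you leave as sketches --- the time extension that removes the $\partial_t$ boundary terms at $s=0,t$, and the absorption of the lower-order corrections coming from $\operatorname{div}(\phi\partial_y)=\phi'$ in the iteration --- are routine and do not affect the validity of the argument; alternatively, as you note, one may simply cite \cite{G90} as the paper does.
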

Next, the anisotropic Sobolev embedding property in the conormal  Sobolev space can be found in \cite{P16}.
\begin{lemma}\label{L2}
Suppose that  $f(t,{\bf{x}})\in H_{co}^3([0, t]\times \mathbb{R}^2_+)$ and $\partial_y f\in H_{co}^2([0, t]\times \mathbb{R}^2_+)$, then
\begin{align*}
\|f\|_{L_{t, {\bf{x}}}^\infty}^2\lesssim\|f(0)\|_{2}^2+\|\partial_yf(0)\|_{1}^2+\int_0^t\left(\|f(s)\|_{3}^2+\|\partial_yf(s)\|_{2}^2\right)ds.
\end{align*}
\end{lemma}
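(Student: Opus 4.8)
The plan is to obtain the $L^\infty_{t,{\bf x}}$ bound by iterating three one–dimensional Sobolev–type estimates, one for each of the variables $t,x,y$, and then translating the resulting genuine derivatives back into the conormal norms. The key structural observation is that, among the three conormal fields, only the normal one $\mathcal{Z}_2=\phi(y)\partial_y$ degenerates at $y=0$; since $\mathcal{Z}_0=\partial_t$ and $\mathcal{Z}_1=\partial_x$ are already the genuine derivatives, I will never need to invoke $\mathcal{Z}_2$ at all. For the $L^\infty_y$ gain I will instead spend exactly one genuine normal derivative $\partial_y$, which is precisely what the separate hypothesis $\partial_y f\in H^{2}_{co}$ supplies, while the time direction will be treated by the fundamental theorem of calculus, which is what produces the initial–data terms at $t=0$ together with the time integrals on the right-hand side.

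First I would carry out the normal ($y$) step. For fixed $(t,x)$, writing $f(t,x,y)^2=-2\int_y^\infty f\,\partial_{y'}f\,dy'$ (the contribution at $y=+\infty$ vanishing by the $L^2$ decay) and applying Cauchy–Schwarz gives $\|f(t,x,\cdot)\|_{L^\infty_y}^2\lesssim\|f(t,x,\cdot)\|_{L^2_y}\|\partial_y f(t,x,\cdot)\|_{L^2_y}$. Taking the supremum over $(t,x)$ and using Young's inequality yields
\[
\|f\|_{L^\infty_{t,{\bf x}}}^2\lesssim \|f\|_{L^\infty_{t,x}L^2_y}^2+\|\partial_y f\|_{L^\infty_{t,x}L^2_y}^2 ,
\]
so it suffices to bound $\|g\|_{L^\infty_{t,x}L^2_y}^2$ for $g\in\{f,\partial_y f\}$. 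For the tangential ($x$) step I would set $w(t,x)=\|g(t,x,\cdot)\|_{L^2_y}^2$ and use the one–dimensional embedding $W^{1,1}(\mathbb{R}_x)\hookrightarrow L^\infty(\mathbb{R}_x)$ together with $\partial_x w=2\int g\,\partial_x g\,dy$ and Cauchy–Schwarz, to obtain $\sup_x w(t,x)\lesssim \|g(t)\|_{L^2_{x,y}}^2+\|\partial_x g(t)\|_{L^2_{x,y}}^2$, that is, $\|g\|_{L^\infty_{t,x}L^2_y}^2\lesssim \sup_{s}\sum_{j\le 1}\|\partial_x^{\,j}g(s)\|_{L^2_{x,y}}^2$.

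It then remains to remove the supremum in time. For each $h\in\{f,\partial_x f,\partial_y f,\partial_x\partial_y f\}$ the fundamental theorem of calculus applied to $\|h(s)\|_{L^2_{x,y}}^2$ gives, after Cauchy–Schwarz and Young, $\sup_{s}\|h(s)\|_{L^2_{x,y}}^2\le \|h(0)\|_{L^2_{x,y}}^2+\int_0^t\big(\|h(s)\|_{L^2_{x,y}}^2+\|\partial_s h(s)\|_{L^2_{x,y}}^2\big)\,ds$. Collecting all contributions, every function that appears is $\partial_t^{\,i}\partial_x^{\,j}\partial_y^{\,k}f$ with $i,j,k\le 1$: when $k=0$ it equals $\mathcal{Z}_0^{\,i}\mathcal{Z}_1^{\,j}f$, of conormal order $i+j\le 2$; when $k=1$ it equals $\mathcal{Z}_0^{\,i}\mathcal{Z}_1^{\,j}(\partial_y f)$, of conormal order $\le 2$ applied to $\partial_y f$; and the initial–data terms all carry $i=0$. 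Hence the entire right-hand side is controlled by $\|f(0)\|_{2}^2+\|\partial_y f(0)\|_{1}^2+\int_0^t\big(\|f(s)\|_{2}^2+\|\partial_y f(s)\|_{2}^2\big)\,ds$, which is dominated by the claimed bound since $\|f(s)\|_2\le\|f(s)\|_3$.

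The routine parts are the three one–dimensional inequalities and the index bookkeeping. The points that require genuine care, and which I regard as the heart of the matter, are: first, the decision to realize the $L^\infty_y$ gain through one authentic normal derivative $\partial_y$ rather than through the degenerate field $\mathcal{Z}_2$, which is exactly why $\partial_y f$ must be assumed in $H^{2}_{co}$ while no control of $f$ stronger than $H^{3}_{co}$ is needed; second, justifying that the boundary contributions at $y=+\infty$ and $x=\pm\infty$ vanish, by approximating with smooth, conormally decaying functions whose limits are guaranteed by the $L^2$–based conormal control; and third, inserting Young's inequality at each stage so that $\|f\|_{L^\infty}$ never reappears on the right and the estimate genuinely closes.
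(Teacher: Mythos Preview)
Your argument is correct. The paper itself does not supply a proof of this lemma; it simply cites \cite{P16} (Paddick), where the anisotropic embedding is established by exactly the mechanism you describe: a one-dimensional trace inequality in the normal variable that costs one genuine $\partial_y$, followed by the standard $W^{1,1}_x\hookrightarrow L^\infty_x$ embedding in the tangential direction, and finally the fundamental theorem of calculus in $t$ to produce the initial-data terms and the time integrals. Your proof is thus the same as the reference's, and in fact your bookkeeping shows that slightly less is needed than the statement records (only $\|f(0)\|_1$ and $\int_0^t\|f(s)\|_2^2\,ds$ rather than the stated orders $2$ and $3$); the looser orders in the lemma are harmless for the paper's applications.
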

To deal with the commutator involving conormal derivatives, we need the following properties of commutators.  The proof can be found in \cite{P16}.	For any integer $m\geq 1$, there exist two families of bounded smooth functions $\{\phi_{k,m}(y)\}_{0\leq k\leq m-1}$ and $\{\phi^{k,m}(y)\}_{0\leq k\leq m-1}$ depending only on $\phi(y)$, such that
\begin{align}\label{2.2}
[\mathcal{Z}_2^m, \partial_y]=\sum_{k=0}^{m-1}\phi_{k,m}(y)\mathcal{Z}_y^k\partial_y=\sum_{k=0}^{m-1}\phi^{k,m}(y)\partial_y\mathcal{Z}_y^k.
\end{align}

\section{Conormal Energy Estimates}
For simplicity, we omit the superscript $\varepsilon$ in the  rest of paper without causing confusion.	
The conormal energy estimates of the classical solution $(\rho , {\bf{v}}, {\bf{B}} )$ are considered in this section. We rewrite the system \eqref{1.1} as follows.
\begin{align}\label{3.1}
\begin{cases}
\partial_t \rho+{\bf{v}}\cdot\nabla\rho +\rho\nabla\cdot {\bf{v}}=0,\\
\rho\partial_t {\bf{v}}+\rho {\bf{v}}\cdot\nabla {\bf{v}}-\varepsilon \mu \Delta  {\bf{v}}-\varepsilon(\mu+\lambda)\nabla(\nabla\cdot {\bf{v}})+ \nabla p =(\nabla\times {\bf{B}})\times {\bf{B}},\\
\partial_t{\bf{B}}-\nabla\times({\bf{v}}\times {\bf{B}}) =\varepsilon\kappa \Delta {\bf{B}}^\varepsilon, \quad \mathrm{div}\ {\bf{B}}=0.
\end{cases}
\end{align}
\begin{lemma}\label{lem1}
Under the assumption in Theorem \ref{Th1}, the classical solution $(\rho, {\bf{v}}, {\bf{B}})$  to the viscous compressible MHD equations \eqref{3.1} with the  boundary conditions \eqref{1.3}-\eqref{1.4} satisfies
\begin{align*}
&\sum\limits_{|\alpha|\leq m}\int_{\mathbb{R}_+^2}\left(\rho(t)|\mathcal{Z}^\alpha {\bf{v}}(t)|^2+|\mathcal{Z}^\alpha ({\bf{B}}(t)-\overset{\rightarrow}{e_y})|^2  +  \gamma^{-1} p^{-1}(t)|\mathcal{Z}^\alpha (p(t)-1)|^2\right) d{\bf{x}}\\
&+\varepsilon\mu\int_0^t\|\nabla {\bf{v}}\|_{m}^2\ ds+\varepsilon(\mu+\lambda)\int_0^t \|\nabla \cdot {\bf{v}}\|_{m}^2 ds+\varepsilon\kappa\int_0^t \| \nabla {\bf{B}} \|_{m}^2\ ds\\
\lesssim & \sum\limits_{|\alpha|\leq m}\int_{\mathbb{R}_+^2}\left(\rho_0|\mathcal{Z}^\alpha {\bf{v}}_0|^2+|\mathcal{Z}^\alpha ({\bf{B}}_0-\overset{\rightarrow}{e_y})|^2  +  \gamma^{-1} p_0^{-1}|\mathcal{Z}^\alpha (p_0-1)|^2\right) d{\bf{x}}\\
&+\left[ \left(1+\|(p, {\bf{v}}, {\bf{B}}-\overset{\rightarrow}{e_y}, \partial_yv_2)\|_{[ m/2]+1, \infty}^2\right)^3+
\| (\partial_y {\bf{v}}, \partial_y {\bf{B}})(0)\|_{ [m/2] +2}^2\right.\\
&+\left.  \int_0^t\|(\partial_y {\bf{v}}, \partial_y {\bf{B}})\|_{[m/2]+2}^2\ ds  +\varepsilon\|(v_1, \partial_yv_1, \partial_yb_1)\|_{2, \infty}^2 \left(1+\|(\rho, p^{-1}, b_1, \partial_y v_2)\|_{L_{t, {\bf{x}}}^\infty}^2 \right) ^2 \right] \notag\\
&\cdot\sum_{j=0}^1 \int_0^t \|\partial_y^j({\bf{v}}, {\bf{B}}-\overset{\rightarrow}{e_y}, p-1) \|_{m-j}^2 \ ds.
\end{align*}
\end{lemma}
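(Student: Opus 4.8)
The plan is to perform the standard conormal energy estimate on the symmetrized form of \eqref{3.1}. First I would apply the conormal operator $\mathcal{Z}^\alpha$ with $|\alpha|\le m$ to each of the three equations in \eqref{3.1}, after first putting the density equation in terms of $p$ via \eqref{1.2} (so that $\gamma^{-1}p^{-1}(p-1)$ becomes the natural weight making the pressure/velocity coupling skew-symmetric). This yields equations of the form $\rho\partial_t\mathcal{Z}^\alpha{\bf v}+\dots$, $\partial_t\mathcal{Z}^\alpha({\bf B}-\vec e_y)+\dots$, and $\gamma^{-1}p^{-1}(\partial_t\mathcal{Z}^\alpha(p-1)+\dots)$, with commutator terms $[\mathcal{Z}^\alpha,\cdot]$ on the right. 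Then I would multiply the velocity equation by $\mathcal{Z}^\alpha{\bf v}$, the magnetic equation by $\mathcal{Z}^\alpha({\bf B}-\vec e_y)$, the pressure equation by $\gamma^{-1}p^{-1}\mathcal{Z}^\alpha(p-1)$, integrate over $\mathbb{R}^2_+$, and sum over $|\alpha|\le m$.

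The main structural points to check are: (i) the leading-order pressure--velocity and Lorentz-force--induction couplings cancel after integration by parts thanks to the symmetrization, up to commutators; (ii) the viscous term $-\varepsilon\mu\Delta{\bf v}-\varepsilon(\mu+\lambda)\nabla(\nabla\cdot{\bf v})$ and the diffusion term $-\varepsilon\kappa\Delta{\bf B}$, after integrating by parts, produce the positive dissipation $\varepsilon\mu\|\nabla\mathcal{Z}^\alpha{\bf v}\|^2+\varepsilon(\mu+\lambda)\|\nabla\cdot\mathcal{Z}^\alpha{\bf v}\|^2+\varepsilon\kappa\|\nabla\mathcal{Z}^\alpha{\bf B}\|^2$ plus boundary terms on $\{y=0\}$ and commutators $[\mathcal{Z}^\alpha,\Delta]$; (iii) the boundary terms: on $\{y=0\}$, $\mathcal{Z}^\alpha{\bf v}=0$ follows from \eqref{1.3} when $\alpha$ contains no more than the allowed $\partial_y$; the magnetic boundary term involves $\partial_y b_1|_{y=0}=0$ and $b_2|_{y=0}=1$ from \eqref{1.4}, which is exactly why the Neumann condition on $b_1$ is imposed, so those terms either vanish or are controlled. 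The time-derivative terms combine with $\partial_t\rho$, $\partial_t p$ coming from the weights to form $\frac{d}{dt}$ of the energy plus lower-order terms estimated by $\|\nabla\cdot{\bf v}\|_{L^\infty}$ times the energy.

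All remaining terms are commutators and nonlinear products such as $[\mathcal{Z}^\alpha,{\bf v}\cdot\nabla]{\bf v}$, $[\mathcal{Z}^\alpha,\nabla\times({\bf v}\times\cdot)]{\bf B}$, the Lorentz term $(\nabla\times{\bf B})\times{\bf B}$ with a $\partial_y$ hitting $({\bf B}-\vec e_y)$ or the constant $\vec e_y$, and the $\varepsilon$-order commutators $[\mathcal{Z}^\alpha,\Delta]$. These I would estimate using Lemma \ref{lem 2.2} (the Gagliardo--Nirenberg--Moser inequality) to split each product into $L^\infty$ norm of the low-derivative factor times $L^2$ conormal norm of the high-derivative factor, and Lemma \ref{L2} to convert the $L^\infty$ norms into controllable quantities; the top-order conormal derivatives always land in $\sum_{j=0}^1\int_0^t\|\partial_y^j({\bf v},{\bf B}-\vec e_y,p-1)\|_{m-j}^2\,ds$, which is the factor pulled out on the right-hand side, while the $[m/2]+1$ or $[m/2]+2$-order pieces go into the polynomial prefactor $(1+\|(p,{\bf v},{\bf B}-\vec e_y,\partial_y v_2)\|_{[m/2]+1,\infty}^2)^3$ and the $\int_0^t\|(\partial_y{\bf v},\partial_y{\bf B})\|_{[m/2]+2}^2$ term. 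Particular care is needed for the terms where $\mathcal{Z}_2^{\alpha_2}=(\phi\partial_y)^{\alpha_2}$ meets a genuine $\partial_y$ (from the viscous/diffusion terms or from $\nabla p$, $\nabla\cdot{\bf v}$): using \eqref{2.2} one rewrites $[\mathcal{Z}_2^k,\partial_y]$ in terms of $\partial_y\mathcal{Z}_2^{<k}$, producing exactly the first-order normal-derivative quantities $\|\partial_y({\bf v},b_1,p)\|_{m-1}$ and the $\varepsilon$-weighted factor $\varepsilon\|(v_1,\partial_y v_1,\partial_y b_1)\|_{2,\infty}^2(1+\|(\rho,p^{-1},b_1,\partial_y v_2)\|_{L^\infty}^2)^2$ appearing in the statement.

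The main obstacle I expect is controlling the $\varepsilon$-order commutators from $\varepsilon\kappa\Delta{\bf B}$ and $\varepsilon\mu\Delta{\bf v}$ coupled with the conormal weights: an honest integration by parts of $\int\mathcal{Z}^\alpha(\varepsilon\kappa\Delta{\bf B})\cdot\mathcal{Z}^\alpha({\bf B}-\vec e_y)$ gives the clean dissipation only after moving one derivative, and the resulting cross terms $\varepsilon\kappa\int[\mathcal{Z}^\alpha,\partial_y^2]{\bf B}\cdot\mathcal{Z}^\alpha{\bf B}$ contain $\varepsilon\partial_y^2$ of conormal derivatives of ${\bf B}$, which at first sight is worse than what the energy provides. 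The resolution is to pair these against the $\varepsilon$-factors carefully: one power of $\varepsilon$ is absorbed into $\sqrt{\varepsilon\kappa}\|\nabla\mathcal{Z}^\alpha{\bf B}\|$ from the dissipation via Young's inequality with a small constant, and what is left is a zeroth-order (in $\varepsilon$) quantity times $\|\partial_y b_1\|_{m-1}$ or, at top order, the $L^\infty$-weighted $\varepsilon$-term displayed in the lemma — this is the mechanism that forces the somewhat baroque right-hand side and is exactly the new feature relative to \cite{CLX21}. Once all terms are bounded in this way, integrating in time from $0$ to $t$ and collecting the dissipation on the left produces the claimed inequality.
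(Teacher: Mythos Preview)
Your overall framework---apply $\mathcal{Z}^\alpha$, multiply by the natural test functions, integrate, and harvest dissipation---is correct and matches the paper. But there is a genuine gap in how you plan to close the commutator and product estimates, and your diagnosis of where the $\varepsilon\|(v_1,\partial_yv_1,\partial_yb_1)\|_{2,\infty}^2$ term comes from is wrong.

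The real obstruction is not the diffusion commutator $[\mathcal{Z}^\alpha,\Delta]$. Those are handled by one integration by parts and \eqref{2.2}, and cost only $\tfrac{\varepsilon\kappa}{6}\int\|\nabla{\bf B}\|_m^2$ plus $\int\|\partial_y{\bf B}\|_{m-1}^2$, which is fine. The dangerous terms are the ones where, after the top-order cancellations, a \emph{bare} $\partial_y v_1$, $\partial_y b_1$, or $\partial_y p$ sits next to two top-order conormal factors with no $\varepsilon$ in front. Concretely: from the convection commutator $[\mathcal{Z}^\alpha,\rho v_2\partial_y]v_1$ with all derivatives on $\rho v_2$ you get $\int\mathcal{Z}^\alpha(\rho v_2)\,\partial_y v_1\,\mathcal{Z}^\alpha v_1$; from the Lorentz/induction remainder you get $\int\partial_y b_1\,\mathcal{Z}^\alpha b_1\,\mathcal{Z}^\alpha v_2$ and $\int\partial_y v_1\,\mathcal{Z}^\alpha b_2\,\mathcal{Z}^\alpha b_1$; from the pressure block you get $\int p^{-1}\partial_y p\,\mathcal{Z}^\alpha v_2\,\mathcal{Z}^\alpha(p-1)$. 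Your plan is to put the low factor in $L^\infty$, but $\|\partial_y v_1\|_{L^\infty}$, $\|\partial_y b_1\|_{L^\infty}$, $\|\partial_y p\|_{L^\infty}$ are \emph{not} uniformly controlled in $\varepsilon$ by $N_m$ (only $\|\partial_y v_2\|_{L^\infty}$ is, because $\partial_y^2 v_2$ sits in the energy without an $\varepsilon$-weight). So the Gagliardo--Nirenberg--Moser split you describe does not close.

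The paper's fix, which you are missing, is a substitution trick that exploits the transverse background field. From the $b_1$-equation one has $\partial_y v_1=-\varepsilon\kappa\partial_y^2 b_1+(\text{tangential and lower-order terms})$, and from the $v_1$-equation $\partial_y b_1=-\varepsilon\mu\partial_y^2 v_1+(\text{tangential and lower-order terms})$; an analogous identity from the $v_2$-equation handles $\partial_y p$. Plugging these in replaces the uncontrolled $\partial_y v_1$ (etc.) by an $\varepsilon$-weighted second normal derivative, which one then writes as $\phi\partial_y^2(\cdot)=\mathcal{Z}_2\partial_y(\cdot)$ and estimates either by absorbing into the dissipation (Young with small constant, yielding the $\tfrac{\varepsilon\mu^2}{6}\int\|\partial_y v_2\|_m^2$ type losses) or by the $\varepsilon$-weighted $L^\infty$ norm $\varepsilon\|\partial_y b_1\|_{1,\infty}$, $\varepsilon\|\partial_y v_1\|_{1,\infty}$. \emph{That} is the origin of the $\varepsilon\|(v_1,\partial_yv_1,\partial_yb_1)\|_{2,\infty}^2(1+\|(\rho,p^{-1},b_1,\partial_yv_2)\|_{L^\infty}^2)^2$ factor in the statement, and it is precisely where the hypothesis $b_2|_{y=0}=1$ (so that $b_2\partial_y v_1$ in the $b_1$-equation has leading coefficient $1$) does its work. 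Without this substitution step your estimate cannot close.
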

\begin{proof}
For any multi-index $\alpha$ satisfying $|\alpha|\leq m$,  we apply the conormal derivatives $\mathcal{Z}^\alpha$ to the last two equations in \eqref{3.1}. By multiplying $(\mathcal{Z}^\alpha {\bf{v}}, \mathcal{Z}^\alpha ({\bf{B}}-\overset{\rightarrow}{e_y}))$ on both sides of the resulting equalities and integrating them over $[0, t]\times\mathbb{R}_+^2$, we get
\begin{align}\label{3.2}
&\frac12	 \int_{\mathbb{R}_+^2}\left(\rho(t)|\mathcal{Z}^\alpha {\bf{v}}(t)|^2  +	 |\mathcal{Z}^\alpha ({\bf{B}}(t)-\overset{\rightarrow}{e_y})|^2 \right)d{\bf{x}}-\frac12	 \int_{\mathbb{R}_+^2}\left(\rho_0|\mathcal{Z}^\alpha {\bf{v}}_0|^2  +	 |\mathcal{Z}^\alpha ({\bf{B}}_0-\overset{\rightarrow}{e_y})|^2 \right)d{\bf{x}}  \nonumber\\
=&\varepsilon \mu\int_0^t\int_{\mathbb{R}_+^2}  \mathcal{Z}^\alpha \Delta {\bf{v}} \cdot \mathcal{Z}^\alpha {\bf{v}} \ d{\bf{x}}ds+\varepsilon(\mu+\lambda)\int_0^t\int_{\mathbb{R}_+^2} \mathcal{Z}^\alpha\nabla(\nabla\cdot {\bf{v}})\cdot \mathcal{Z}^\alpha {\bf{v}} \ d{\bf{x}}ds \nonumber\\
&+\int_0^t\int_{\mathbb{R}_+^2}  \mathcal{C}_1^\alpha \cdot \mathcal{Z}^\alpha {\bf{v}}\  d{\bf{x}}ds+\int_0^t\int_{\mathbb{R}_+^2}  \mathcal{C}_2^\alpha \cdot \mathcal{Z}^\alpha {\bf{v}}\  d{\bf{x}}ds+\varepsilon\kappa\int_0^t\int_{\mathbb{R}_+^2}\mathcal{Z}^\alpha\Delta {\bf{B}}^\varepsilon\cdot\mathcal{Z}^\alpha ({\bf{B}}-\overset{\rightarrow}{e_y})\  d{\bf{x}}ds \nonumber\\
&-\int_0^t\int_{\mathbb{R}_+^2} \mathcal{Z}^\alpha\nabla p\cdot \mathcal{Z}^\alpha {\bf{v}} \ d{\bf{x}}ds   +\int_0^t\int_{\mathbb{R}_+^2} \mathcal{Z}^\alpha [(\nabla\times {\bf{B}})\times {\bf{B}}]\cdot \mathcal{Z}^\alpha {\bf{v}} \ d{\bf{x}}ds\nonumber\\ & +\int_0^t\int_{\mathbb{R}_+^2} \mathcal{Z}^\alpha[\nabla\times({\bf{v}}\times {\bf{B}})]\cdot\mathcal{Z}^\alpha ({\bf{B}}-\overset{\rightarrow}{e_y})\  d{\bf{x}}ds,
\end{align}
where
\begin{align*}
&\mathcal{C}_1^\alpha=-[Z^\alpha, \rho\partial_t]{\bf v} =-\sum_{\beta+\gamma=\alpha\atop |\beta|\geq 1} C_\alpha^\beta \mathcal{Z}^\beta \rho\mathcal{Z}^\gamma\partial_t {\bf{v}},
\end{align*}
and 	
\begin{align*}
\mathcal{C}_2^\alpha =-[Z^\alpha, \rho{\bf v}\cdot\nabla]{\bf v}=-\sum_{\beta+\gamma=\alpha\atop |\beta|\geq 1}C_\alpha^\beta \mathcal{Z}^\beta(\rho {\bf{v}})\cdot\mathcal{Z}^\gamma \nabla {\bf{v}}-\rho v_2\cdot[\mathcal{Z}^\alpha, \partial_y]{\bf{v}}.
\end{align*}
 In what follows, we deal with term by term of  \eqref{3.2}. The first three terms on the right hand side of \eqref{3.2} have the same estimates as \cite{CLX21}.

 To consider the fourth  term on the right hand side of \eqref{3.2}, we have
 \begin{align}\label{change 4}
 &\int_0^t\int_{\mathbb{R}_+^2}  \mathcal{C}_2^\alpha \cdot \mathcal{Z}^\alpha {\bf{v}} \ d{\bf{x}}ds=\sum_{\beta+\gamma=\alpha\atop |\beta|\geq 1}C_\alpha^\beta\int_0^t\int_{\mathbb{R}_+^2} \Big(\mathcal{Z}^\beta(\rho v_1)\cdot\mathcal{Z}^\gamma \partial_x{\bf{v}}\cdot\mathcal{Z}^\alpha {\bf{v}}-\rho  v_2[\mathcal{Z}^\alpha, \partial_y]{\bf{v}}\cdot\mathcal{Z}^\alpha {\bf{v}}\notag\\
 &+ \mathcal{Z}^\beta(\rho v_2)\cdot\mathcal{Z}^\gamma \partial_y v_2\cdot\mathcal{Z}^\alpha v_2\Big)\ d{\bf{x}}ds+\sum_{\beta+\gamma=\alpha\atop |\beta|\geq 1}C_\alpha^\beta\int_0^t\int_{\mathbb{R}_+^2}  \mathcal{Z}^\beta(\rho v_2)\cdot\mathcal{Z}^\gamma \partial_yv_1\cdot\mathcal{Z}^\alpha v_1  \ d{\bf{x}}ds.
 \end{align}
The first term on the right hand side of \eqref{change 4} is estimated by
 \begin{align*}
 &\sum_{\beta+\gamma=\alpha\atop |\beta|\geq 1}C_\alpha^\beta\int_0^t\int_{\mathbb{R}_+^2} \Big(\mathcal{Z}^\beta(\rho v_1)\cdot\mathcal{Z}^\gamma \partial_x{\bf{v}}\cdot\mathcal{Z}^\alpha {\bf{v}}-\rho  v_2[\mathcal{Z}^\alpha, \partial_y]{\bf{v}}\cdot\mathcal{Z}^\alpha {\bf{v}}\notag\\
 &+ \mathcal{Z}^\beta(\rho v_2)\cdot\mathcal{Z}^\gamma \partial_y v_2\cdot\mathcal{Z}^\alpha v_2\Big) \ d{\bf{x}}ds\notag\\
 \lesssim& \|(\rho, {\bf{v}}, \partial_y v_2)\|_{ 1,\infty}^2\sum_{j=0}^1\left( \int_0^t\|\partial_y^j(\rho-1, {\bf{v}})\|_{m-j}^2 \; ds \right)^\frac12\left(\int_0^t\|{\bf{v}}\|_{m}^2  \;ds\right)^\frac12.
 \end{align*}
 For the second term on the right hand side of \eqref{change 4}, we have
 \begin{align}\label{change 5}
 &\sum_{\beta+\gamma=\alpha\atop |\beta|\geq 1}C_\alpha^\beta\int_0^t\int_{\mathbb{R}_+^2}  \mathcal{Z}^\beta(\rho v_2)\cdot\mathcal{Z}^\gamma \partial_y v_1\cdot\mathcal{Z}^\alpha v_1 \ d{\bf{x}}ds \notag\\
 =&\sum_{\beta+\gamma=\alpha \atop 1\leq|\beta|\leq |\gamma|}C_\alpha^\beta\int_0^t\int_{\mathbb{R}_+^2}   \mathcal{Z}^\beta(\rho v_2)\cdot\mathcal{Z}^\gamma \partial_y v_1\cdot\mathcal{Z}^\alpha v_1 \ d{\bf{x}}ds\notag\\
 &+\sum_{\beta+\gamma=\alpha\atop 1\leq |\gamma|<|\beta|}C_\alpha^\beta\int_0^t\int_{\mathbb{R}_+^2}   \mathcal{Z}^\beta(\rho v_2)\cdot\mathcal{Z}^\gamma \partial_y v_1\cdot\mathcal{Z}^\alpha v_1\  d{\bf{x}}ds\notag\\
  &+ \int_0^t\int_{\mathbb{R}_+^2}  \mathcal{Z}^\alpha(\rho v_2)\cdot  \partial_y v_1\cdot\mathcal{Z}^\alpha v_1 \ d{\bf{x}}ds.
 \end{align}
 It is direct to  bound the first two terms on the right hand side of \eqref{change 5} by
 \begin{align*}
 &\|(\rho, v_2)\|_{[m/2], \infty}\left(\int_0^t\|\partial_y v_1\|_{m-1}^2\; ds\right)^\frac12\left(\int_0^t\| v_1\|_{m}^2\; ds\right)^\frac12\notag\\
 &+\|(\rho, \phi \partial_y v_1, \phi^{-1} v_2)\|_{[m/2], \infty}
 \left(\int_0^t\|(\rho-1, \phi^{-1}v_2)\|_{m-1}^2\; ds\right)^\frac12\left(\int_0^t\|v_1\|_{m}^2\; ds\right)^\frac12\notag\\
  \lesssim&\left(1+\|(\rho,   {\bf{v}}, \partial_y  v_2)\|_{[m/2]+1, \infty}^2\right)
 \left(\int_0^t\|(\rho-1, \partial_y{\bf{v}})\|_{m-1}^2\; ds+\int_0^t\|v_1\|_{m}^2\; ds\right).
 \end{align*}

 Next, we consider the third term on the right hand side of \eqref{change 5}.
 Let $\tilde b_2=b_2-1$. Then  the equation of $b_1$  can be rewritten as follows.
 \begin{align}\label{4.1}
 \partial_yv_1+\varepsilon\kappa\partial_y^2 b_1=-\varepsilon\kappa\partial_x^2 b_1+\partial_t b_1+v_1 \partial_xb_1-  \tilde b_2\partial_yv_1 +v_2\partial_y b_1+b_1\partial_y v_2.
 \end{align}	

 Inserting above equality into the third term on the right hand side of \eqref{change 5}, we have
 \begin{align}\label{change 7}
 & \int_0^t\int_{\mathbb{R}_+^2}  \mathcal{Z}^\alpha(\rho v_2)\cdot  \partial_y v_1\cdot\mathcal{Z}^\alpha v_1 \; d{\bf{x}}ds\notag\\
 =&-\varepsilon\kappa \int_0^t\int_{\mathbb{R}_+^2}   \mathcal{Z}^\alpha(\rho v_2)\cdot  \partial_y^2 b_1\cdot\mathcal{Z}^\alpha v_1 \;d{\bf{x}}ds \notag\\
 &+ \int_0^t\int_{\mathbb{R}_+^2}  \mathcal{Z}^\alpha(\rho v_2)\cdot \Big(-\varepsilon\kappa  \partial_x^2b_1+\partial_t  b_1+  v_1\partial_x b_1
 + b_1\partial_y v_2+ v_2\partial_y b_1- \widetilde b_2\partial_y v_1\Big)\cdot\mathcal{Z}^\alpha v_1\; d{\bf{x}}ds.
 \end{align}
By \eqref{2.2}, the first part on the right hand side of \eqref{change 7} can be handled in the following way.
\begin{align*}
 &-\varepsilon\kappa \int_0^t\int_{\mathbb{R}_+^2}   \mathcal{Z}^\alpha(\rho v_2)\cdot  \partial_y^2 b_1\cdot\mathcal{Z}^\alpha v_1 \;d{\bf{x}}ds \\
 =& -\varepsilon\kappa\sum_{\beta+\gamma=\alpha} \int_0^t\int_{\mathbb{R}_+^2}   \mathcal{Z}^\beta\rho\phi^{-1} \mathcal{Z}^\gamma v_2\cdot \phi \partial_y^2 b_1\cdot\mathcal{Z}^\alpha v_1 \;d{\bf{x}}ds\\
 =& -\varepsilon\kappa\sum_{\beta+\gamma=\alpha} \int_0^t\int_{\mathbb{R}_+^2}   \mathcal{Z}^\beta\rho\phi^{-1} \mathcal{Z}^\gamma v_2\cdot \mathcal{Z}_2\partial_y  b_1\cdot\mathcal{Z}^\alpha v_1 \;d{\bf{x}}ds \\
 \lesssim&\varepsilon\|\mathcal{Z}_2\partial_y  b_1\|_{L_{t, {\bf{x}}}^\infty}\|(\rho, \partial_y v_2)\|_{L_{t, {\bf{x}}}^\infty}\left(\int_0^t\|(\rho-1,  \partial_y v_2)\|_{m}^2\; ds\right)^\frac12\left(\int_0^t\| v_1\|_{m}^2 \;ds\right)^\frac12\\
 \lesssim& \varepsilon\|\partial_yb_1\|_{1, \infty}^2\|(\rho, \partial_y v_2)\|_{L_{t, {\bf{x}}}^\infty}^2\int_0^t\|(p-1, v_1)\|_{m}^2\; ds+\frac{\varepsilon\mu^2}6\int_0^t\|  \partial_yv_2\|_{m}^2 \;ds.
 \end{align*}The remaining terms on the right hand side of \eqref{change 7} can be estimated directly.
 \begin{align*}
 & \int_0^t\int_{\mathbb{R}_+^2}  \mathcal{Z}^\alpha(\rho v_2)\cdot \Big(-\varepsilon\kappa  \partial_x^2b_1+\partial_t  b_1+  v_1\partial_x b_1
 + b_1\partial_y v_2+ v_2\partial_y b_1- \widetilde b_2\partial_y v_1\Big)\cdot\mathcal{Z}^\alpha v_1 \;d{\bf{x}}ds \notag\\
 \lesssim&\Big(1+\|(\rho, v_1, \phi \partial_y v_1, v_2, \phi^{-1}v_2, \partial_y v_2, b_1,   \phi\partial_y b_1, \phi^{-1}\widetilde b_2 )\|_{2,\infty}\Big)^3\left(\int_0^t\|(\rho-1, v_2)\|_{m}^2\; ds\right)^\frac12\notag\\
 &\cdot\left(\int_0^t\|  v_1\|_{m}^2 \;ds\right)^\frac12\notag\\
 \lesssim&\Big(1+\|(\rho, v_1,  v_2,  b_1, \partial_yv_2 )\|_{3,\infty}\Big)^3 \int_0^t\|(\rho-1, {\bf{v}})\|_{m}^2\; ds.
 \end{align*}

 To handle the fifth term on the right hand side of \eqref{3.2},  by integration by parts, we have
\begin{align}\label{3.3}
&\varepsilon\kappa\int_0^t\int_{\mathbb{R}_+^2}\mathcal{Z}^\alpha\Delta {\bf{B}} \cdot\mathcal{Z}^\alpha ({\bf{B}}-\overset{\rightarrow}{e_y})\  d{\bf{x}}ds  \notag\\
= &-\varepsilon\kappa\int_0^t \|\nabla {\bf{B}} \|_{m}^2 ds+\varepsilon \kappa\int_0^t\int_{\mathbb{R}_+^2}[\mathcal{Z}^\alpha, \nabla\cdot]\nabla  {\bf{B}} \cdot\mathcal{Z}^\alpha ({\bf{B}}-\overset{\rightarrow}{e_y})\  d{\bf{x}}ds \notag\\
&+\varepsilon \kappa\int_0^t\int_{\mathbb{R}_+^2}\mathcal{Z}^\alpha\nabla  {\bf{B}} \cdot[\mathcal{Z}^\alpha, \nabla] ({\bf{B}}-\overset{\rightarrow}{e_y})\ d{\bf{x}}ds.
\end{align}
For the  second term on the right hand side of \eqref{3.3}, 	due to \eqref{2.2},  one has
\begin{align}\label{3.4}
&\varepsilon \kappa\int_0^t\int_{\mathbb{R}_+^2}[\mathcal{Z}^\alpha, \nabla\cdot]\nabla  {\bf{B}} \cdot\mathcal{Z}^\alpha ({\bf{B}}-\overset{\rightarrow}{e_y})\  d{\bf{x}}ds \notag\\
=&\varepsilon \kappa\sum_{k=0}^{m-1}\int_0^t\int_{\mathbb{R}_+^2} \phi^{k, m}(y)\partial_y\mathcal{Z}_y^k\partial_y b_1 \cdot\mathcal{Z}^\alpha  b_1\  d{\bf{x}}ds \notag\\
&+\varepsilon \kappa\sum_{k=0}^{m-1}\int_0^t\int_{\mathbb{R}_+^2} \phi^{k, m}(y)\partial_y\mathcal{Z}_y^k\partial_y b_2 \cdot\mathcal{Z}^\alpha  (b_2-1)\  d{\bf{x}}ds.
\end{align}
By integration by parts,  the first term on the right hand side of \eqref{3.4} is estimated as follows.
\begin{align*}
&\varepsilon \kappa\sum_{k=0}^{m-1}\int_0^t\int_{\mathbb{R}_+^2} \phi^{k, m}(y)\partial_y\mathcal{Z}_y^k\partial_y b_1 \cdot\mathcal{Z}^\alpha  b_1\  d{\bf{x}}ds \\
=&-\varepsilon \kappa\sum_{k=0}^{m-1}\int_0^t\int_{\mathbb{R}_+^2} \partial_y\phi^{k, m}(y)\mathcal{Z}_y^k\partial_y b_1 \cdot\mathcal{Z}^\alpha  b_1\  d{\bf{x}}ds \\
&-\varepsilon \kappa\sum_{k=0}^{m-1}\int_0^t\int_{\mathbb{R}_+^2} \phi^{k, m}(y)\mathcal{Z}_y^k\partial_y b_1 \cdot\partial_y\mathcal{Z}^\alpha  b_1\  d{\bf{x}}ds \\
\leq& \frac{\varepsilon\kappa^2}{6}\int_0^t\|\partial_y b_1\|_{m}^2\; ds+ \int_0^t\left(\| b_1  \|_{m}^2+\|\partial_yb_1  \|_{m-1}^2\right)\; ds.
\end{align*}
Since $\nabla\cdot {\bf{B}}=0$, the  second term on the right hand side of \eqref{3.4} satisfies
\begin{align*}
&\varepsilon \kappa\sum_{k=0}^{m-1}\int_0^t\int_{\mathbb{R}_+^2} \phi^{k, m}(y)\partial_y\mathcal{Z}_y^k\partial_y b_2 \cdot\mathcal{Z}^\alpha   (b_2-1)\  d{\bf{x}}ds\notag\\
=&-\varepsilon \kappa\sum_{k=0}^{m-1}\int_0^t\int_{\mathbb{R}_+^2} \phi^{k, m}(y)\partial_y\mathcal{Z}_y^k\partial_x b_1 \cdot\mathcal{Z}^\alpha (b_2-1)\  d{\bf{x}}ds\notag\\
\leq&\frac{\varepsilon\kappa^2}{6}\int_0^t\|\partial_y b_1\|_{m}^2 \;ds +C \int_0^t\|b_2-1\|_{m}^2 \;ds.
\end{align*}
For the third term on the right hand side of \eqref{3.3}, by \eqref{2.2},  we have
\begin{align*}
\varepsilon \kappa\int_0^t\int_{\mathbb{R}_+^2}\mathcal{Z}^\alpha\nabla  {\bf{B}} \cdot[\mathcal{Z}^\alpha, \nabla] ({\bf{B}}-\overset{\rightarrow}{e_y})\ d{\bf{x}}ds
\leq&\frac{\varepsilon \kappa^2}{6}\int_0^t\| \nabla{\bf{B}}\|_{m}^2\; ds+C\int_0^t\|\partial_y{\bf{B}}\|_{m-1}^2 \;ds.
\end{align*}
	
 Then  we write  the  sixth term on the right hand side of \eqref{3.2} as follows
\begin{equation}\label{change 1}
\begin{split}
& -\int_0^t\int_{\mathbb{R}_+^2} \mathcal{Z}^\alpha\nabla p\cdot \mathcal{Z}^\alpha {\bf{v}}\  d{\bf{x}}ds\\
=&\int_0^t\int_{\mathbb{R}_+^2} \mathcal{Z}^\alpha (p-1)\cdot [\nabla\cdot, \mathcal{Z}^\alpha] {\bf{v}} \ d{\bf{x}}d  - \int_0^t\int_{\mathbb{R}_+^2}[ \mathcal{Z}^\alpha,  \nabla] (p-1)\cdot \mathcal{Z}^\alpha {\bf{v}}\ d{\bf{x}}ds \\
&+\int_0^t\int_{\mathbb{R}_+^2}  \mathcal{Z}^\alpha (p-1)\cdot \mathcal{Z}^\alpha(\nabla\cdot {\bf{v}}) \ d{\bf{x}}ds.
\end{split}
\end{equation}
The first two terms have the same estimates as \cite{CLX21}. Hence, we only pay attention to the last term of  \eqref{change 1}.
 We   insert the equation of momentum
\begin{align*}
\nabla\cdot{\bf{v}}=-\gamma^{-1} p^{-1}\partial_t p-\gamma^{-1} p^{-1}{\bf{v}}\cdot \nabla p,
\end{align*}
  into  the  third term on the right hand side of \eqref{change 1} to get
\begin{align}\label{change 2}
&\int_0^t\int_{\mathbb{R}_+^2} \mathcal{Z}^\alpha (p-1)\cdot \mathcal{Z}^\alpha (\nabla\cdot {\bf{v}}) \ d{\bf{x}}ds\notag\\
= & -\gamma^{-1}\sum_{\beta+\gamma=\alpha } C_\alpha^\beta\int_0^t\int_{\mathbb{R}_+^2} \mathcal{Z}^\alpha (p-1)\cdot \mathcal{Z}^\beta   p^{-1}\partial_t\mathcal{Z}^\gamma   (p-1)\ d{\bf{x}}ds\notag\\
&-\gamma^{-1}\int_0^t\int_{\mathbb{R}_+^2} \mathcal{Z}^\alpha (p-1)\cdot   p^{-1}{\bf{v}}\cdot	\mathcal{Z}^\alpha  \nabla (p-1)\ d{\bf{x}}ds\notag\\
 &-\gamma^{-1}\sum_{\beta+  \gamma+\iota=\alpha\atop  |\iota|\neq |\alpha|} C_\alpha^\beta \int_0^t\int_{\mathbb{R}_+^2} \mathcal{Z}^\alpha (p-1) \cdot \mathcal{Z}^\beta   p^{-1}\mathcal{Z}^\gamma{\bf{v}}\cdot\mathcal{Z}^\iota \nabla (p-1)\ d{\bf{x}}ds.
\end{align}
 The first term shares the  same estimates as \cite{CLX21}.
Then we write the  second term on the right hand side of \eqref{change 2} as follows
\begin{align}\label{change 3}
&-\gamma^{-1}\int_0^t\int_{\mathbb{R}_+^2} \mathcal{Z}^\alpha (p-1)\cdot   p^{-1}{\bf{v}}\cdot	\mathcal{Z}^\alpha  \nabla (p-1)  \ d{\bf{x}}ds\notag\\
=&-\gamma^{-1}\int_0^t\int_{\mathbb{R}_+^2}  \mathcal{Z}^\alpha (p-1)\cdot   p^{-1}{\bf{v}}\cdot[\mathcal{Z}^\alpha , \nabla] (p-1)  \  d{\bf{x}}ds\notag\\
&-\gamma^{-1}\int_0^t\int_{\mathbb{R}_+^2} \mathcal{Z}^\alpha (p-1)\cdot  p^{-1}{\bf{v}}\cdot\nabla\mathcal{Z}^\alpha  (p-1)\ d{\bf{x}}ds.
\end{align}
 The first term on the right hand side of the above equation shares the same estimate as \cite{CLX21}. The second term on the right hand side of \eqref{change 3} can be treated as follows.
\begin{align*}
&\gamma^{-1}\int_0^t\int_{\mathbb{R}_+^2} \mathcal{Z}^\alpha (p-1)\cdot  p^{-1}{\bf{v}}\cdot\nabla\mathcal{Z}^\alpha  (p-1) \ d{\bf{x}}ds\\
=&-\frac{1}{2\gamma}\int_0^t\int_{\mathbb{R}_+^2}\nabla\cdot(  p^{-1}{\bf{v}})|\mathcal{Z}^\alpha (p-1)|^2 d{\bf{x}}ds\\
\lesssim &\left(\|\partial_x p^{-1}\|_{L_{t,{\bf{x}}}^\infty} \|v_1\|_{L_{t,{\bf{x}}}^\infty}+\|\phi\partial_y p^{-1}\|_{L_{t,{\bf{x}}}^\infty} \|\phi^{-1}v_2\|_{L_{t,{\bf{x}}}^\infty}+\| p^{-1}\|_{L_{t,{\bf{x}}}^\infty} \|\nabla\cdot {\bf{v}}\|_{L_{t,{\bf{x}}}^\infty}\right)\int_0^t\|p-1\|_{m}^2\; ds\notag\\
\lesssim &\left(\|  p^{-1}\|_{1,\infty} \|v_1\|_{1,\infty}+\| p^{-1}\|_{1,\infty} \|\partial_yv_2\|_{L_{t,{\bf{x}}}^\infty} \right)\int_0^t\|p-1\|_{m}^2\; ds.
\end{align*}
Next, we divide the last term  on the right hand side of \eqref{change 2}   into three parts.
\begin{align}\label{change 26}
 &-\gamma^{-1}\sum_{\beta+\gamma+\iota=\alpha\atop  |\iota|\neq |\alpha|} C_\alpha^\beta\int_0^t\int_{\mathbb{R}_+^2} \mathcal{Z}^\alpha (p-1) \cdot \mathcal{Z}^\beta  p^{-1}\mathcal{Z}^\gamma {\bf{v}}\cdot\mathcal{Z}^\iota \nabla (p-1)\ d{\bf{x}}ds\notag\\
 =&-\gamma^{-1}\sum_{\beta+\gamma+\iota=\alpha\atop |\iota|\neq |\alpha|} C_\alpha^\beta\int_0^t\int_{\mathbb{R}_+^2} \mathcal{Z}^\alpha (p-1) \cdot \mathcal{Z}^\beta  p^{-1}\mathcal{Z}^\gamma v_1\cdot\partial_x\mathcal{Z}^\iota (p-1)\ d{\bf{x}}ds\notag\\
 &-\gamma^{-1}\sum_{\beta+\gamma+\iota=\alpha\atop   |\gamma|, |\iota|\neq |\alpha|} C_\alpha^\beta\int_0^t\int_{\mathbb{R}_+^2} \mathcal{Z}^\alpha (p-1) \cdot \mathcal{Z}^\beta  p^{-1}\mathcal{Z}^\gamma v_2\cdot\mathcal{Z}^\iota  \partial_y(p-1)\ d{\bf{x}}ds\notag\\
 &-\gamma^{-1}   \int_0^t\int_{\mathbb{R}_+^2} \mathcal{Z}^\alpha (p-1) \cdot   p^{-1}\mathcal{Z}^\alpha v_2\cdot\partial_y(p-1)\ d{\bf{x}}ds.
\end{align}
For the first term on the right hand side of \eqref{change 26}, we have
\begin{align*}
&-\gamma^{-1}\sum_{\beta+\gamma+\iota=\alpha\atop |\iota|\neq |\alpha|} C_\alpha^\beta\int_0^t\int_{\mathbb{R}_+^2} \mathcal{Z}^\alpha (p-1) \cdot \mathcal{Z}^\beta   p^{-1}\mathcal{Z}^\gamma v_1\cdot\partial_x\mathcal{Z}^\iota   (p-1)\ d{\bf{x}}ds\notag\\
\lesssim&\|(v_1, p^{-1}, p)\|_{1, \infty}^2 \left(\int_0^t\|(v_1, p^{-1}-1, p-1)\|_{m}^2\; ds\right)^\frac12 \left(\int_0^t\|  p-1\|_{m}^2 \;ds\right)^\frac12.
\end{align*}
As for the second term on the right hand side of \eqref{change 26}, it holds
\begin{align}
&-\gamma^{-1}\sum_{\beta+\gamma+\iota=\alpha\atop   |\gamma|, |\iota|\neq |\alpha|} C_\alpha^\beta\int_0^t\int_{\mathbb{R}_+^2} \mathcal{Z}^\alpha (p-1) \cdot \mathcal{Z}^\beta   p^{-1}\mathcal{Z}^\gamma v_2\cdot\mathcal{Z}^\iota  \partial_y (p-1)\ d{\bf{x}}ds\notag\\
\lesssim&\Big[\sum_{\beta+\gamma+\iota=\alpha\atop |\gamma|, |\iota|<|\beta|}\| \phi^{-1} \mathcal{Z}^\gamma v_2\|_{L_{t, {\bf{x}}}^\infty}\|\phi \mathcal{Z}^\iota \partial_y(p-1)\|_{L_{t, {\bf{x}}}^\infty}\left(\int_0^t\|\mathcal{Z}^\beta p^{-1} \|_{L_x^2L_y^2} ^2\; ds\right)^\frac12\notag\\
&+\sum_{\beta+\gamma+\iota=\alpha\atop |\beta|, |\iota|\leq |\gamma|<|\alpha|} \|\mathcal{Z}^\beta p^{-1} \|_{L_{t, {\bf{x}}}^\infty}\|\phi \mathcal{Z}^\iota \partial_y(p-1) \|_{L_{t, {\bf{x}}}^\infty}\left(\int_0^t\|\phi^{-1} \mathcal{Z}^\gamma v_2\|_{L_x^2L_y^2} ^2 \; ds\right)^\frac12\notag\\
&+\sum_{\beta+\gamma+\iota=\alpha\atop |\beta|, |\gamma|\leq |\iota|<|\alpha|} \|\mathcal{Z}^\beta p^{-1} \|_{L_{t, {\bf{x}}}^\infty}\|\phi^{-1} \mathcal{Z}^\gamma v_2 \|_{L_{t, {\bf{x}}}^\infty}\left(\int_0^t\|\phi \mathcal{Z}^\iota\partial_y (p-1)\|_{L_x^2L_y^2} ^2 \; ds\right)^\frac12\Big]\notag\\ &\cdot\left(\int_0^t\|\mathcal{Z}^\alpha(p-1)\|_{L_x^2L_y^2} ^2  \;ds\right)^\frac12\notag\\
\lesssim&\|(p, p^{-1}, \partial_y v_2)\|_{[m/2]+1, \infty}^2\sum_{j=0}^1\left(\int_0^t\|\partial_y^j(v_2, p^{-1}-1, p-1)\|_{m-j}^2\; ds\right)^\frac12\left(\int_0^t\|  p-1\|_{m}^2 \;ds\right)^\frac12.
\end{align}
From the second equation in \eqref{3.1}, we have
 \begin{align}\label{change 30}
 \partial_y p-\varepsilon (2\mu+\lambda)\partial_y^2 v_2
 =-\rho \partial_t v_2+b_1\partial_x b_2-\rho {\bf{v}}\cdot \nabla v_2-b_1\partial_y b_1+\varepsilon\mu\partial_x^2 v_2+\varepsilon (\mu+\lambda)\partial_y\partial_x v_1.
 \end{align}
 Hence, the last part of \eqref{change 26} can be rewritten as
\begin{align}\label{change 27}
 &-\gamma^{-1}   \int_0^t\int_{\mathbb{R}_+^2} \mathcal{Z}^\alpha (p-1) \cdot   p^{-1}\mathcal{Z}^\alpha v_2\cdot\partial_y(p-1)\ d{\bf{x}}ds\notag\\
 =&-\varepsilon (2\mu+\lambda)\gamma^{-1}  \int_0^t\int_{\mathbb{R}_+^2}\mathcal{Z}^\alpha (p-1) \cdot   p^{-1}\mathcal{Z}^\alpha v_2\cdot\partial_y^2 v_2 \ d{\bf{x}}ds\notag\\
 &-\varepsilon (\mu+\lambda))\gamma^{-1}  \int_0^t\int_{\mathbb{R}_+^2}\mathcal{Z}^\alpha (p-1) \cdot   p^{-1}\mathcal{Z}^\alpha v_2\cdot\partial_y\partial_x v_1 \ d{\bf{x}}ds \notag\\
 &+\gamma^{-1}   \int_0^t\int_{\mathbb{R}_+^2} \mathcal{Z}^\alpha (p-1) \cdot   p^{-1}\mathcal{Z}^\alpha v_2\cdot\Big(
 \rho \partial_t v_2-b_1\partial_x b_2+\rho {\bf{v}}\cdot \nabla v_2-\varepsilon\mu\partial_x^2 v_2\Big)  \ d{\bf{x}}ds\notag\\
 &+\gamma^{-1}   \int_0^t\int_{\mathbb{R}_+^2} \mathcal{Z}^\alpha (p-1) \cdot   p^{-1}\mathcal{Z}^\alpha v_2\cdot b_1\partial_y b_1 \ d{\bf{x}}ds.
\end{align}
The first term can be estimated in the following way.
\begin{align*}
&-\varepsilon (2\mu+\lambda)\gamma^{-1}\int_0^t\int_{\mathbb{R}_+^2}\mathcal{Z}^\alpha (p-1)\cdot p^{-1}\mathcal{Z}^\alpha v_2\cdot\partial_y^2 v_2 \ d{\bf{x}}ds\\
\lesssim&\varepsilon \|\phi\partial_y^2v_2\|_{L_{t, {\bf{x}}}^\infty}\|p^{-1}\|_{L_{t, {\bf{x}}}^\infty}\left( \int_0^t\int_{\mathbb{R}_+^2}\|\mathcal{Z}^\alpha (p-1)\|_{L_x^2L_y^2}^2\;d{\bf{x}}ds\right)^{\frac12}   \left(\int_0^t\int_{\mathbb{R}_+^2}\|\phi^{-1}\mathcal{Z}^\alpha v_2\|_{L_x^2L_y^2}^2\;d{\bf{x}}ds\right)^{\frac12}\\
\le& \frac{\varepsilon\mu^2}{6}\int_0^t \|\partial_yv_2\|_{m}^2\;ds+C\varepsilon\|\partial_yv_2\|_{1, \infty}^2\|p^{-1}\|_{L_{t, {\bf{x}}}^\infty}^2\int_0^t \|p-1\|_m^2\;ds.
\end{align*}
Similarly, the second term has the bound
\begin{align*}
 \frac{\varepsilon\mu^2}{6}\int_0^t \|\partial_yv_2\|_{m}^2\;ds+C\varepsilon\|v_1\|_{2, \infty}^2\|p^{-1}\|_{L_{t, {\bf{x}}}^\infty}^2\int_0^t \|p^{-1}-1\|_m^2\;ds
\end{align*}
and the third term on the right hand side of \eqref{change 27} can be bounded by
\begin{align*}
\|p^{-1}\|_{L_{t, {\bf{x}}}^\infty}\Big(1+\|(\rho, {\bf{v}}, \partial_y v_2, {\bf{B}}-\overset{\rightarrow}{e_y})\|_{2, \infty}\Big)^3\int_0^t\|(  v_2, p-1 )\|_{m}^2 ds.
\end{align*}
To estimate the last term of \eqref{change 27},  by the first equation in \eqref{3.1}, we have
\begin{align}\label{change 31}
\partial_y b_1=\rho\partial_t v_1+\partial_x p+b_2\partial_x b_2-\tilde b_2\partial_y b_1+\rho {\bf{v}}\cdot\nabla v_1-\varepsilon \mu \partial_x^2 v_1-\varepsilon(\mu+\lambda)\partial_x(\nabla\cdot {\bf{v}})-\varepsilon\mu\partial_y^2 v_1,
\end{align}
 which yields that
\begin{align}\label{change 28}
 &\gamma^{-1}   \int_0^t\int_{\mathbb{R}_+^2} \mathcal{Z}^\alpha (p-1) \cdot   p^{-1}\mathcal{Z}^\alpha v_2\cdot b_1\partial_y b_1 \ d{\bf{x}}ds\notag\\
 = &-\varepsilon\mu\gamma^{-1}   \int_0^t\int_{\mathbb{R}_+^2} \mathcal{Z}^\alpha (p-1) \cdot   p^{-1}\mathcal{Z}^\alpha v_2\cdot b_1 \partial_y^2 v_1  \ d{\bf{x}}ds\notag\\
 &+ \gamma^{-1}   \int_0^t\int_{\mathbb{R}_+^2}\mathcal{Z}^\alpha (p-1) \cdot   p^{-1}\mathcal{Z}^\alpha v_2\cdot b_1 \Big(\rho\partial_t v_1+\partial_x p+b_2\partial_x b_2-\phi^{-1}\tilde b_2\phi\partial_y b_1\notag\\
 &+\rho  v_1 \partial_x v_1 +\rho \phi^{-1} v_2 \phi\partial_y v_1-\varepsilon \mu \partial_x^2 v_1-\varepsilon(\mu+\lambda)\partial_x(\nabla\cdot {\bf{v}}) \Big)\ d{\bf{x}}ds.
\end{align}
The first term can be handled as follows.
\begin{align*}
&-\varepsilon\mu\gamma^{-1}   \int_0^t\int_{\mathbb{R}_+^2} \mathcal{Z}^\alpha (p-1) \cdot   p^{-1}\mathcal{Z}^\alpha v_2\cdot b_1 \partial_y^2 v_1  \ d{\bf{x}}ds\notag\\
\lesssim&\varepsilon\|p^{-1}\|_{L_{t, {\bf{x}}}^\infty}\|b_1\|_{L_{t, {\bf{x}}}^\infty}\|\phi\partial_y^2 v_1 \|_{L_{t,{\bf{x}}}^\infty  }\left(\int_0^t\|\mathcal{Z}^\alpha (p-1)\|_{L_x^2L_y^2}^2\; ds\right)^\frac12\left(\int_0^t\|\phi^{-1}\mathcal{Z}^\alpha v_2\|_{L_x^2L_y^2}^2\; ds\right)^\frac12\notag\\
\lesssim&\frac{\varepsilon\mu^2}{6}\int_0^t \|\partial_yv_2\|_m^2\;ds
+\varepsilon\|\partial_y  v_1 \|_{1, \infty}^2\|p^{-1}\|_{L_{t, {\bf{x}}}^\infty}^2\|b_1\|_{L_{t, {\bf{x}}}^\infty}^2\int_0^t\| p-1\|_{m}^2\;ds.
\end{align*}
 The second term on the right hand side of \eqref{change 28} can be bounded by
 \begin{align*}
  \|p^{-1}\|_{L_{t, {\bf{x}}}^\infty} \|b_1\|_{L_{t, {\bf{x}}}^\infty}\left(1+\|(\rho, v_1, {\bf{B}}-\overset{\rightarrow}{e_y}, \partial_y v_2)\|_{1, \infty}\right)^3\int_0^t\|(v_2, p-1)\|_{m}^2 \;ds.
 \end{align*}
It remains to estimate the last two terms on the right hand side of \eqref{3.2}.
\begin{align}\label{change 22}
&\int_0^t\int_{\mathbb{R}_+^2} \mathcal{Z}^\alpha [(\nabla\times {\bf{B}})\times {\bf{B}}]\cdot \mathcal{Z}^\alpha {\bf{v}} \ d{\bf{x}}ds +\int_0^t\int_{\mathbb{R}_+^2} \mathcal{Z}^\alpha[\nabla\times({\bf{v}}\times {\bf{B}})]\cdot\mathcal{Z}^\alpha ({\bf{B}}-\overset{\rightarrow}{e_y})\  d{\bf{x}}ds\notag\\
=&-\sum\limits_{|\beta|+|\gamma|=|\alpha|}C_\alpha^\beta\int_0^t\int_{\mathbb{R}_+^2}\Big( \mathcal{Z}^\beta b_2\partial_x\mathcal{Z}^\gamma b_2\mathcal{Z}^\alpha v_1+ \mathcal{Z}^\beta v_1\partial_x\mathcal{Z}^\gamma b_1\mathcal{Z}^\alpha b_1  -\mathcal{Z}^\beta b_1\partial_x \mathcal{Z}^\gamma \widetilde  b_2\mathcal{Z}^\alpha v_2\notag\\
& - \mathcal{Z}^\beta b_2\partial_x\mathcal{Z}^\gamma v_1\mathcal{Z}^\alpha \widetilde b_2-\mathcal{Z}^\beta v_1\partial_x\mathcal{Z}^\gamma b_2 \mathcal{Z}^\alpha \widetilde b_2+\mathcal{Z}^\beta b_1\partial_x\mathcal{Z}^\gamma v_2\mathcal{Z}^\alpha \widetilde b_2+\mathcal{Z}^\beta v_2\partial_x\mathcal{Z}^\gamma b_1\mathcal{Z}^\alpha \widetilde b_2 \Big) \ d{\bf{x}}ds\notag\\ &+\sum\limits_{|\beta|+|\gamma|=|\alpha|}C_\alpha^\beta\int_0^t\int_{\mathbb{R}_+^2}\Big( \mathcal{Z}^\beta b_2 \mathcal{Z}^\gamma \partial_y b_1\mathcal{Z}^\alpha v_1 - \mathcal{Z}^\beta b_1 \mathcal{Z}^\gamma \partial_y b_1\mathcal{Z}^\alpha v_2 +  \mathcal{Z}^\beta b_2 \mathcal{Z}^\gamma \partial_y v_1\mathcal{Z}^\alpha b_1   \notag\\
&-\mathcal{Z}^\beta b_1 \mathcal{Z}^\gamma \partial_y v_2\mathcal{Z}^\alpha b_1-\mathcal{Z}^\beta v_2 \mathcal{Z}^\gamma \partial_y b_1 \mathcal{Z}^\alpha b_1 \Big) \;d{\bf{x}}ds \triangleq\mathbb{I}_1+\mathbb{I}_2.
\end{align}
 First we consider the case $\gamma=\alpha$. By integration by parts, we  write the terms on the right hand side of \eqref{change 22} as follows
 \begin{align}\label{change 23}
 &\int_0^t\int_{\mathbb{R}_+^2} \partial_x\widetilde b_2\mathcal{Z}^\alpha b_2\mathcal{Z}^\alpha v_1\  d{\bf{x}}ds+\int_0^t\int_{\mathbb{R}_+^2} \partial_x  b_1\mathcal{Z}^\alpha v_1\mathcal{Z}^\alpha b_1\ d{\bf{x}}ds-\int_0^t\int_{\mathbb{R}_+^2} \partial_x  b_1\mathcal{Z}^\alpha v_2\mathcal{Z}^\alpha \widetilde b_2\ d{\bf{x}}ds\notag\\
 &+\frac12\int_0^t\int_{\mathbb{R}_+^2}  \nabla\cdot {\bf{v}}|\mathcal{Z}^\alpha  ({\bf{B}}-\overset{\rightarrow}{e_y})|^2 \ d{\bf{x}}ds+\int_0^t\int_{\mathbb{R}_+^2} \partial_y  b_1\mathcal{Z}^\alpha b_1\mathcal{Z}^\alpha v_2\ d{\bf{x}}ds.
 \end{align}
 It is easy to bound the first four terms of \eqref{change 23}  by
 \begin{align*}
 \|(v_1, \partial_y v_2, b_1, \widetilde b_2)\|_{1, \infty} \int_0^t\|(v_1, v_2,  b_1, \widetilde b_2)\|_{m}^2\ ds.
 \end{align*}
 By \eqref{change 31} and the Sobolev embedding inequality,  the last term on the right hand side of \eqref{change 23} is solved by
 \begin{align}\label{change 25}
& \int_0^t\int_{\mathbb{R}_+^2} \partial_y  b_1\mathcal{Z}^\alpha b_1\mathcal{Z}^\alpha v_2\ d{\bf{x}}ds\notag\\
=&- \varepsilon\mu\int_0^t\int_{\mathbb{R}_+^2} \partial_y^2 v_1\mathcal{Z}^\alpha b_1\mathcal{Z}^\alpha v_2\ d{\bf{x}}ds  +\int_0^t\int_{\mathbb{R}_+^2}\Big(\rho\partial_t v_1+\partial_x p+b_2\partial_x b_2-\phi^{-1}\tilde b_2\phi\partial_y b_1\notag\\
&+\rho  v_1 \partial_xv_1+\rho  \phi^{-1}v_2 \phi\partial_yv_1 -\varepsilon \mu \partial_x^2 v_1-\varepsilon(\mu+\lambda)\partial_x(\nabla\cdot {\bf{v}})\Big)\mathcal{Z}^\alpha b_1\mathcal{Z}^\alpha v_2\ d{\bf{x}}ds\notag\\
\lesssim&\varepsilon\|\phi\partial_y^2 v_1\|_{L_{t, {\bf{x}}}^\infty}\left(\int_0^t\|\mathcal{Z}^\alpha b_1\|_{L_x^2L_y^2}^2\; ds\right)^\frac12\left(\int_0^t\|\phi^{-1}\mathcal{Z}^\alpha v_2\|_{L_x^2L_y^2}^2\; ds\right)^\frac12\notag\\
&+\left(1+\|(\rho, v_1, p, b_1, \tilde{b}_2, \partial_y v_2)\|_{2, \infty}\right)^3\int_0^t\|(v_2, b_1)\|_{m}^2\; ds \notag\\
\lesssim&\frac{\varepsilon\mu^2}{6}\int_0^t \|\partial_yv_2\|_m^2\;ds
+\varepsilon\|\partial_y v_1\|_{1, \infty}^2\int_0^t\|  b_1\|_{m}^2\; ds\notag\\
&+\left(1+\|({\bf{v}}, {\bf{B}}-\overset{\rightarrow}{e_y}, p,\partial_y v_2)\|_{2, \infty}\right)^3\int_0^t\|(v_2, b_1)\|_{m}^2 \;ds.
 \end{align}
 Next, we  consider the case $|\gamma|<|\alpha|$.  It is easy to know  $\mathbb{I}_1$  has the bound
 \begin{align*}
 \|({\bf{v}}, {\bf{B}}-\overset{\rightarrow}{e_y})\|_{1, \infty}\int_0^t\|({\bf{v}}, {\bf{B}}-\overset{\rightarrow}{e_y})\|_{m}^2 \;ds.
 \end{align*}
 Special attention is paid to  $\mathbb{I}_2$.  When  $1\leq |\beta|<|\gamma|$,    it  is  bounded by
 \begin{align*}
&\|( {\bf{v}},  {\bf{B}}-\overset{\rightarrow}{e_y})\|_{ [m/2], \infty}\left(\int_0^t\|( \partial_y {\bf{v}},  \partial_y{\bf{B}})\|_{m-1}^2\; ds\right)^\frac12 \left(\int_0^t\|( {\bf{v}},   {\bf{B}}-\overset{\rightarrow}{e_y})\|_{m}^2 \;ds\right)^\frac12.
 \end{align*}
When   $|\gamma|\leq |\beta|<|\alpha|$,  the Sobolev embedding inequality yields that
 \begin{align*}
 \mathbb{I}_2\lesssim&\sum_{\beta+\gamma=\alpha\atop |\gamma|\leq |\beta|<|\alpha| }\sup_{0\leq s\leq t}\|(\mathcal{Z}^\gamma \partial_y {\bf{v}}, \mathcal{Z}^\gamma\partial_y {\bf{B}} )(s)\|_{L_x^\infty L_y^2}\left(\int_0^t\|(\mathcal{Z}^\beta {\bf{v}}, \mathcal{Z}^\beta({\bf{B}}-\overset{\rightarrow}{e_y})\|_{L_{x}^2 L_y^\infty}^2\; ds\right)^\frac12\notag\\
 & \cdot\left(\int_0^t\|(\mathcal{Z}^\alpha  {\bf{v}}, \mathcal{Z}^\alpha( {\bf{B}}-\overset{\rightarrow}{e_y}))\|_{L_x^2L_y^2}^2\; ds\right)^\frac12\notag\\
 \lesssim&\left[\|(\partial_y {\bf{v}}(0), \partial_y {\bf{B}} (0))\|_{[m/2]+2}+\left(\int_0^t\|(\partial_y {\bf{v}}, \partial_y {\bf{B}})\|_{[m/2]+2}^2\; ds\right)^\frac12\right] \left(\int_0^t\| ({\bf{v}},   {\bf{B}}-\overset{\rightarrow}{e_y})\|_{m-1}^2\; ds\right)^\frac14\notag\\
 &\cdot\left(\int_0^t\| (\partial_y {\bf{v}},   \partial_y{\bf{B}} )\|_{m-1}^2 \;ds\right)^\frac14 \left(\int_0^t\|({\bf{v}}, {\bf{B}}-\overset{\rightarrow}{e_y})\|_{m}^2 \;ds\right)^\frac12\\
\lesssim&\left(1+\|(\partial_y {\bf{v}}(0), \partial_y {\bf{B}} (0))\|_{[m/2]+2}^2+\int_0^t\|(\partial_y {\bf{v}}, \partial_y {\bf{B}})\|_{[m/2]+2}^2 \; ds\right) \\
 &\cdot\left(\int_0^t\| (\partial_y {\bf{v}}, \partial_y{\bf{B}} )\|_{m-1}^2 \;ds+ \int_0^t\|({\bf{v}}, {\bf{B}}-\overset{\rightarrow}{e_y})\|_{m}^2 \;ds\right).
 \end{align*}
 When $\beta=\alpha$, we write $\mathbb{I}_2$ as follows
\begin{align}\label{change 24}
&-\int_0^t\int_{\mathbb{R}_+^2}\partial_y v_2\mathcal{Z}^\alpha b_1  \mathcal{Z}^\alpha b_1 \ d{\bf{x}}ds
+\int_0^t\int_{\mathbb{R}_+^2} \partial_y b_1\left(\mathcal{Z}^\alpha b_2   \mathcal{Z}^\alpha v_1-2\mathcal{Z}^\alpha b_1    \mathcal{Z}^\alpha v_2  \right) \ d{\bf{x}}ds\notag\\
&+\int_0^t\int_{\mathbb{R}_+^2} \partial_y v_1 \mathcal{Z}^\alpha b_2   \mathcal{Z}^\alpha b_1 \ d{\bf{x}}ds.
\end{align}
The first term of \eqref{change 24} can be handled by
\begin{align*}
-\int_0^t\int_{\mathbb{R}_+^2}\partial_y v_2\mathcal{Z}^\alpha b_1  \mathcal{Z}^\alpha b_1 \ d{\bf{x}}ds\lesssim\|\partial_y v_2\|_{L_{t, {\bf{{x}}}}^\infty}\int_0^t\|b_1\|_{m}^2 \;ds.
\end{align*}
The second term has the similar estimates as \eqref{change 25}. For the last term on the right hand side of \eqref{change 24}, by \eqref{4.1}, we have
 \begin{align*}
&\int_0^t\int_{\mathbb{R}_+^2}  \partial_y v_1\mathcal{Z}^\alpha b_2    \mathcal{Z}^\alpha b_1 \ d{\bf{x}}ds\\
=&-\varepsilon\kappa\int_0^t\int_{\mathbb{R}_+^2} \partial_y^2 b_1 \mathcal{Z}^\alpha b_2  \mathcal{Z}^\alpha b_1 \ d{\bf{x}}ds-\varepsilon\kappa\int_0^t\int_{\mathbb{R}_+^2}\partial_x^2 b_1\mathcal{Z}^\alpha b_2    \mathcal{Z}^\alpha b_1 \ d{\bf{x}}ds\\
&+\int_0^t\int_{\mathbb{R}_+^2}\left(\partial_t b_1+v_1 \partial_xb_1-  \tilde b_2\partial_yv_1 +v_2\partial_y b_1+b_1\partial_y v_2\right)\mathcal{Z}^\alpha b_2\mathcal{Z}^\alpha b_1 \ d{\bf{x}}ds\\
\lesssim& \varepsilon\sup_{0\leq s\leq t}\|\phi\partial_y^2 b_1(s)\|_{L_{ x}^\infty L_y^2}\left(\int_0^t\|\phi^{-1}\mathcal{Z}^\alpha \widetilde b_2\|_{L_x^2L_y^2}^2\; ds\right)^\frac12\left(\int_0^t\|\mathcal{Z}^\alpha   b_1\|_{L_x^2L_y^\infty}^2\; ds\right)^\frac12\\
&+\left(1+\|(v_1, b_1,\phi^{-1}v_2, \phi\partial_y b_2, \partial_y v_2)\|_{2, \infty}\right)^2\int_0^t\| {\bf{B}}-\overset{\rightarrow}{e_y}\|_{m}^2 \;ds\\
\lesssim&\frac{\varepsilon\kappa^2}{6}\int_0^t \|\nabla {\bf{B}}\|_m^2\;ds +
\left(\|\partial_y b_1(0)\|_{3}^4+\int_0^t\|\partial_y b_1\|_{3}^4  \ ds\right)\int_0^t\|  b_1\|_{m}^2\;ds\\
& +\left(1+\|(v_1, b_1,  \tilde{b}_2, \partial_y v_2)\|_{2, \infty}\right)^2\int_0^t\| {\bf{B}}-\overset{\rightarrow}{e_y}\|_{m}^2 \;ds.
\end{align*}
Collecting all the above estimates and by H\"older's inequality, we prove Lemma \ref{lem1}.

\end{proof}

\section{Normal Derivative Estimates}
To close the energy estimates in Section 2,  it suffices to derive the conormal estimates for the first order normal derivatives of  $({\bf{v}}, {\bf{B}}, p)$ and the second order normal derivatives of $v_2$ due to Lemma \ref{L2}, which is carried out in the subsequent parts.

\subsection{Conormal Estimate of $\partial_y v_2 $}
We first consider the conormal estimate of $\partial_y v_2$. By the equation of density, one has
\begin{align}\label{4.7}
\partial_y v_2=-\partial_x v_1-  \gamma^{-1}p^{-1}\partial_t p- \gamma^{-1}p^{-1}{\bf{v}}\cdot \nabla p.
\end{align}
For any multi-index $\alpha$ satisfying $|\alpha|\leq m-1$, by applying  $\mathcal{Z}^\alpha $ to the above equality and taking the $L^2$ inner product on both sides of the resulting equality, it follows that
\begin{equation}\label{change 8}
\begin{split}
\int_0^t\int_{\mathbb{R}_+^2}|\mathcal{Z}^\alpha\partial_y v_2|^2 \;d{\bf{x}}ds
\lesssim& \int_0^t\int_{\mathbb{R}_+^2}|\mathcal{Z}^\alpha \partial_x v_1|^2\;d{\bf{x}}ds+ \int_0^t\int_{\mathbb{R}_+^2} |\mathcal{Z}^\alpha ( p^{-1}\partial_t p) |^2\;d{\bf{x}}ds\\
&+ \int_0^t\int_{\mathbb{R}_+^2} | \mathcal{Z}^\alpha (p^{-1} v_1\cdot \partial_x p) |^2\; d{\bf{x}}ds+ \int_0^t\int_{\mathbb{R}_+^2} | \mathcal{Z}^\alpha (p^{-1} v_2\cdot \partial_y p) |^2\; d{\bf{x}}ds.
\end{split}
\end{equation}
The first three terms on the right hand side of \eqref{change 8} share the same estimates as \cite{CLX21}. On the other hand, by the same trick as \eqref{aa}, the last term has the bound.

\begin{align*}
& \int_0^t\int_{\mathbb{R}_+^2} | \mathcal{Z}^\alpha (p^{-1} v_2\cdot \partial_y p) |^2 d{\bf{x}}ds\\
\lesssim&\|p^{-1}\|_{L_{t, {\bf{x}}}^\infty}^2\|\partial_yv_2\|_{L_{t, {\bf{x}}}^\infty}^2\int_0^t \|p-1\|_m^2\;ds+\sup_{0\leq s\leq t}\|\partial_yp(s)\|_{L_{x} ^\infty L_y^2}^2 \|p^{-1}\|_{L_{t, {\bf{x}}}^\infty}^2\int_0^t \|\mathcal{Z}^\alpha v_2\|_{L_x^2L_y^\infty}^2\;ds\\
 &+\|p\|_{1, \infty}^2\|\partial_yv_2\|_{L_{t, {\bf{x}}}^\infty}^2\int_0^t \|p^{-1}-1\|_{m-1}^2\;ds.
\end{align*}
Notice that by  the Sobolev embedding inequality, the second term can be estimated as follows.
\begin{align*}
&\sup_{0\leq s\leq t}\|\partial_yp(s)\|_{L_{x} ^\infty L_y^2}^2 \|p^{-1}\|_{L_{t, {\bf{x}}}^\infty}^2\int_0^t \|\mathcal{Z}^\alpha v_2\|_{L_x^2L_y^\infty}^2\;ds\\
\le &C \|p^{-1}\|_{L_{t, {\bf{x}}}^\infty}^4 \left(\|\partial_yp(0)\|_{2}^4+\int_0^t \|\partial_yp\|_{2}^4\;ds\right)\int_0^t \|v_2\|_{m-1}^2\;ds+\frac12\int_0^t \|\partial_yv_2\|_{m-1}^2\;ds.
\end{align*}
Thus we obtain the estimate of $\partial_yv_2$.
\begin{align*}
\int_0^t\int_{\mathbb{R}_+^2}|\mathcal{Z}^\alpha\partial_y v_2|^2 \;d{\bf{x}}ds
\lesssim& \left(1+\|(v_1, p, p^{-1}, \partial_yv_2 )\|_{1, \infty}^2\right)^2\int_0^t\|(v_1, p-1, p^{-1}-1)\|_{m}^2 \;ds\\
&+ \|p^{-1}\|_{L_{t, {\bf{x}}}^\infty}^4\left(\|\partial_yp(0)\|_{2}^4+\int_0^t \|\partial_yp\|_{2}^4\;ds\right) \int_0^t \|v_2\|_{m-1}^2\;ds.
\end{align*}

\subsection{ Conormal Estimate of $\partial_y v_1 $}
Then we consider the conormal estimate of $\partial_y v_1$. 	For any multi-index $\alpha$ satisfying $|\alpha|\leq m-1$, by applying $\mu^\frac12\mathcal{Z}^\alpha  $ to  \eqref{4.1} and taking  the $L^2$ inner product of the  resulting equality, we have
\begin{align}\label{4.2}
&\int_0^t\int_{\mathbb{R}_+^2}\left(\mu| \mathcal{Z}^\alpha \partial_y v_1|^2\  +\varepsilon^2\kappa^2\mu |\mathcal{Z}^\alpha \partial_y^2 b_1|^2\right)\; d{\bf{x}}ds+2\varepsilon\kappa\mu\int_0^t\int_{\mathbb{R}_+^2} \mathcal{Z}^\alpha \partial_y v_1\cdot\mathcal{Z}^\alpha \partial_y^2 b_1\; d{\bf{x}}ds \nonumber \\
&\lesssim\varepsilon^2\kappa^2\mu\int_0^t\int_{\mathbb{R}_+^2}|\mathcal{Z}^\alpha  \partial_x^2 b_1|^2 \;d{\bf{x}}ds+ \mu\int_0^t\int_{\mathbb{R}_+^2}|	\partial_t\mathcal{Z}^\alpha  b_1|^2 \;d{\bf{x}}ds +\mu\int_0^t\int_{\mathbb{R}_+^2}|\mathcal{Z}^\alpha  (v_1\partial_x b_1)|^2 \;d{\bf{x}}ds \nonumber\\
&+\mu\int_0^t\int_{\mathbb{R}_+^2}|\mathcal{Z}^\alpha  (\tilde{b}_2\partial_y v_1)|^2\; d{\bf{x}}ds+\mu\int_0^t\int_{\mathbb{R}_+^2}|\mathcal{Z}^\alpha  (v_2\partial_y b_1 )|^2\; d{\bf{x}}ds +\mu\int_0^t\int_{\mathbb{R}_+^2}|\mathcal{Z}^\alpha  (b_1\partial_y v_2)|^2\; d{\bf{x}}ds\nonumber\\
&\triangleq \varepsilon^2\kappa^2\mu\int_0^t\int_{\mathbb{R}_+^2}|\mathcal{Z}^\alpha  \partial_x^2 b_1|^2\; d{\bf{x}}ds+\mathbb{J}.
\end{align}
By integration by parts,   we handle the second term on the left hand side of \eqref{4.2}  as follows.
\begin{align}\label{4.3}
&2\varepsilon\kappa\mu\int_0^t\int_{\mathbb{R}_+^2} \mathcal{Z}^\alpha \partial_y v_1\cdot\mathcal{Z}^\alpha \partial_y^2 b_1\; d{\bf{x}}ds\notag\\
=&2\varepsilon\kappa\mu\int_0^t\int_{\mathbb{R}_+^2} \mathcal{Z}^\alpha \partial_y v_1\cdot[\mathcal{Z}^\alpha, \partial_y] \partial_y b_1\; d{\bf{x}}ds -2\varepsilon\kappa\mu\int_0^t\int_{\mathbb{R}_+^2} [\partial_y, \mathcal{Z}^\alpha] \partial_y v_1\cdot\mathcal{Z}^\alpha \partial_y b_1 \;d{\bf{x}}ds\notag\\
&-2\varepsilon\kappa\mu\int_0^t\int_{\mathbb{R}_+^2}  \mathcal{Z}^\alpha \partial_y^2 v_1\cdot\mathcal{Z}^\alpha \partial_y b_1\; d{\bf{x}}ds,
\end{align}
where the boundary condition $\partial_yb_1|_{y=0}=0$ is used.

For the first term on the right hand side of \eqref{4.3},  by \eqref{2.2},  one has
\begin{align*}
&2\varepsilon\kappa\mu\int_0^t\int_{\mathbb{R}_+^2} \mathcal{Z}^\alpha \partial_y v_1\cdot[\mathcal{Z}^\alpha, \partial_y] \partial_y b_1\; d{\bf{x}}ds  \\
=&2\varepsilon\kappa\mu\sum_{k=0}^{m-2}\int_0^t\int_{\mathbb{R}_+^2} \phi_{k,m-1}(y) \mathcal{Z}^\alpha \partial_y v_1\cdot \mathcal{Z}_y^k \partial_y^2 b_1 \;d{\bf{x}}ds\\
\le& \frac\mu 2\int_0^t \|\partial_yv_1\|_{m-1}^2\;ds+C\varepsilon^2\kappa^2\mu^2\int_0^t \|\partial_y^2b_1\|_{m-2}^2\;ds.
\end{align*}
With the similar line, the second term on the right hand side of \eqref{4.3} has the bound
\begin{align*}
&-2\varepsilon\kappa\mu\int_0^t\int_{\mathbb{R}_+^2} [\partial_y, \mathcal{Z}^\alpha] \partial_y v_1\cdot\mathcal{Z}^\alpha \partial_y b_1 \;d{\bf{x}}ds \\
=&-2\varepsilon\kappa\mu\sum_{k=0}^{m-2}\int_0^t\int_{\mathbb{R}_+^2}  \phi^{k, m-1}(y) \mathcal{Z}_y^k\partial_y^2 v_1\cdot\mathcal{Z}^\alpha \partial_y b_1 \;d{\bf{x}}ds \\
\le& \frac\kappa 2\int_0^t \|\partial_yb_1\|_{m-1}^2\;ds+C\varepsilon^2\kappa^2\mu^2\int_0^t \|\partial_y^2v_1\|_{m-2}^2\;ds.
\end{align*}
Next, we turn to consider  the terms on the right hand side of \eqref{4.2}. For the first term on the right hand side of \eqref{4.2}, we have
\begin{align*}
\varepsilon^2\kappa^2\mu\int_0^t\int_{\mathbb{R}_+^2}|\mathcal{Z}^\alpha  \partial_x^2 b_1|^2\; d{\bf{x}}ds\lesssim	\varepsilon^2\kappa^2\mu\int_0^t\|\partial_xb_1\|_{m}^2 \;ds.
\end{align*}
With the same calculations in \cite{CLX21}, the rest terms on the right hand side of \eqref{4.2} can be  estimated as follows.
\begin{align*}
\mathbb{J}\lesssim \left(1+  \|(v_1, b_1, \partial_y v_2)\|_{1,\infty}^2\right)\int_0^t\|( {\bf{v}}, b_1 )\|_{m}^2 \;ds +\|b_1\|_{1, \infty}^2\int_0^t\|\partial_y v_2\|_{m-1}^2 \;ds.
\end{align*}
Combining above estimates, we obtain by induction that
\begin{align*}
&\int_0^t\int_{\mathbb{R}_+^2}\left(\mu| \mathcal{Z}^\alpha \partial_y v_1|^2\  +\varepsilon^2\kappa^2\mu |\mathcal{Z}^\alpha \partial_y^2 b_1|^2\right) \;d{\bf{x}}ds
-2\varepsilon\mu\kappa\int_0^t\int_{\mathbb{R}_+^2} \mathcal{Z}^\alpha \partial_y^2 v_1\cdot\mathcal{Z}^\alpha \partial_y b_1 \;d{\bf{x}}ds\\
\le &\frac\kappa 2\int_0^t \|\partial_yb_1\|_{m-1}^2\;ds+C\left(1+  \|(v_1, b_1, \partial_y v_2)\|_{1,\infty}^2\right)\int_0^t\|( {\bf{v}}, b_1 )\|_{m}^2 \;ds\notag\\
&  +C\varepsilon^2\kappa^2\mu^2\int_0^t \|\partial_y^2v_1\|_{m-2}^2\;ds+C\|b_1\|_{1, \infty}^2\int_0^t\|\partial_y v_2\|_{m-1}^2\; ds.
\end{align*}

 \subsection{Conormal Estimate of $\partial_y b_1 $ }
To derive the conormal estimate of $\partial_y b_1$, the transverse background magnetic field is also essentially used.
For any  multi-index $\alpha$ satisfying $|\alpha|\leq m-1$, applying  $\kappa^\frac12\mathcal{Z}^\alpha $ to \eqref{change 31} and taking the $L^2$ inner product of the resulting equality give that
\begin{align}\label{4.6}
&\int_0^t\int_{\mathbb{R}_+^2}\left( \kappa|\mathcal{Z}^\alpha\partial_y b_1|^2+\varepsilon^2\mu^2\kappa|\mathcal{Z}^\alpha\partial_y^{2}  v_1|^2 \right)d{\bf{x}}ds +2\varepsilon\mu\kappa\int_0^t\int_{\mathbb{R}_+^2}\mathcal{Z}^\alpha\partial_y^{2}  v_1 \cdot \mathcal{Z}^\alpha\partial_y b_1\; d{\bf{x}}ds \notag\\
\lesssim&\kappa\int_0^t\int_{\mathbb{R}_+^2} |\mathcal{Z}^\alpha (\rho\partial_t v_1)|^2\; d{\bf{x}}ds+\kappa\int_0^t\int_{\mathbb{R}_+^2} |\mathcal{Z}^\alpha \partial_x p|^2\; d{\bf{x}}ds+\kappa\int_0^t\int_{\mathbb{R}_+^2} |\mathcal{Z}^\alpha (b_2\partial_x b_2)|^2\; d{\bf{x}}ds \notag\\
&+\varepsilon^2 \mu ^2\kappa\int_0^t\int_{\mathbb{R}_+^2}|\mathcal{Z}^\alpha \partial_x^2 v_1|^2\;d{\bf{x}}ds+\varepsilon^2(\mu+\lambda)^2\kappa\int_0^t\int_{\mathbb{R}_+^2} |\mathcal{Z}^\alpha \partial_x (\nabla\cdot {\bf{v}})|^2\; d{\bf{x}}ds\notag\\
&+\kappa\int_0^t\int_{\mathbb{R}_+^2} |\mathcal{Z}^\alpha (\phi^{-1}\tilde b_2\phi\partial_y b_1)|^2\; d{\bf{x}}ds+\kappa\int_0^t\int_{\mathbb{R}_+^2} |\mathcal{Z}^\alpha (\rho  v_1 \partial_x v_1)|^2\; d{\bf{x}}ds\notag\\
&+\kappa\int_0^t\int_{\mathbb{R}_+^2} |\mathcal{Z}^\alpha (\rho  v_2 \partial_y v_1)|^2\; d{\bf{x}}ds\triangleq\mathbb{K}+\kappa\int_0^t\int_{\mathbb{R}_+^2} |\mathcal{Z}^\alpha (\rho  v_2 \partial_y v_1)|^2 \;d{\bf{x}}ds.
\end{align}	
 By the same arguments as in \cite{CLX21},  we solve  $\mathbb{K}$ by
\begin{align*}
\mathbb{K}\lesssim&\left(1+\|(\rho, v_1, b_1, \tilde{b}_2)\|_{1, \infty}^2\right)^2\int_0^t\|(v_1, b_1, \tilde{b}_2, p-1)\|_{m}^2\; ds+\varepsilon^2\mu^2\kappa\int_0^t\|\partial_x v_1\|_{m}^2\; ds\notag\\
&+\varepsilon^2(\mu+\lambda)^2\int_0^t\|\nabla\cdot {\bf{v}}\|_{m}^2\; ds.
\end{align*}
For the last term on the right hand side of \eqref{4.6}, by Lemma \ref{lem 2.2}, we have
\begin{align}\label{aa}
 &\kappa\int_0^t\int_{\mathbb{R}_+^2} |\mathcal{Z}^\alpha (\rho v_2\partial_y v_1)|^2\ d{\bf{x}}ds \notag\\
 \lesssim & \int_0^t\int_{\mathbb{R}_+^2} |\mathcal{Z}^\alpha (\rho \phi^{-1}v_2\phi\partial_y v_1)|^2 \ d{\bf{x}}ds\notag\\
  \lesssim & \|\rho\phi^{-1}v_2\|_{L_{t, {\bf{x}}}^\infty}^2\int_0^t \|v_1\|_m^2\;ds+ \|\mathcal{Z}v_1\|_{L_{t, {\bf{x}}}^\infty}^2\int_0^t \|\rho\phi^{-1}v_2\|_{m-1}^2\;ds\notag\\
 \lesssim &\|\rho\|_{L_{t, {\bf{x}}}^\infty}^2\|\partial_yv_2\|_{L_{t, {\bf{x}}}^\infty}^2\int_0^t \|v_1\|_m^2\;ds+\|v_1\|_{1, \infty}^2\|\rho\|_{L_{t, {\bf{x}}}^\infty}^2\int_0^t \|\partial_yv_2\|_{m-1}^2\;ds\notag\\
 &+\|v_1\|_{1, \infty}^2\|\partial_yv_2\|_{L_{t, {\bf{x}}}^\infty}^2\int_0^t \|p-1\|_{m-1}^2\;ds.
\end{align}
 Thus we conclude that
 \begin{align*}
 &\int_0^t\int_{\mathbb{R}_+^2}\left( \kappa|\mathcal{Z}^\alpha\partial_y b_1|^2+\varepsilon^2\mu^2\kappa|\mathcal{Z}^\alpha\partial_y^{2}  v_1|^2 \right)\ d{\bf{x}}ds +2\varepsilon\mu\kappa\int_0^t\int_{\mathbb{R}_+^2}\mathcal{Z}^\alpha\partial_y^{2}  v_1 \cdot \mathcal{Z}^\alpha\partial_y b_1 \ d{\bf{x}}ds\\
 \lesssim&\left(1+\|(\rho, v_1, b_1, \tilde{b}_2, \partial_yv_2)\|_{1, \infty}^2\right)^2\int_0^t\|(\rho, v_1,  {\bf{B}}-\overset{\rightarrow}{e_y}, p-1)\|_{m}^2 \ ds+\|v_1\|_{1, \infty}^2\int_0^t \|\partial_yv_2\|_{m-1}^2\;ds\\
 &+\varepsilon^2\mu^2\kappa\int_0^t\|\partial_x v_1\|_{m}^2 \ ds+\varepsilon^2(\mu+\lambda)^2\int_0^t\|\nabla\cdot {\bf{v}}\|_{m}^2\ ds.
 \end{align*}

\subsection{Conormal Estimate of $\partial_y p $ }
This subsection is devoted to the conormal estimate of $\partial_y p$.
For any  multi-index $\alpha$ satisfying $|\alpha|\leq m-1$, by applying $\mathcal{Z}^\alpha $ to  \eqref{change 30} and taking the  $L^2$ inner product on both sides of the resulting equation, we have
\begin{equation}\label{change 12}
\begin{split}
&\int_0^t\int_{\mathbb{R}_+^2}\left( |\mathcal{Z}^\alpha \partial_y p |^2+ \varepsilon ^2(2\mu+\lambda)^2 |\mathcal{Z}^\alpha \partial_y^2 v_2|^2\right)  \ d{\bf{x}}ds -2\varepsilon (2\mu+\lambda)\int_0^t\int_{\mathbb{R}_+^2}\mathcal{Z}^\alpha  \partial_y^2  v_2\cdot \mathcal{Z}^\alpha \partial_y p \ d{\bf{x}}ds \\
\lesssim &\int_0^t\int_{\mathbb{R}_+^2} |\mathcal{Z}^\alpha(\rho \partial_t v_2)|^2\ d{\bf{x}}ds+\int_0^t\int_{\mathbb{R}_+^2} |\mathcal{Z}^\alpha (b_1\partial_x b_2)|^2\ d{\bf{x}}ds\\
&+\varepsilon^2\mu^2\int_0^t\int_{\mathbb{R}_+^2} |\mathcal{Z}^\alpha \partial_x^2 v_2|^2 \ d{\bf{x}}ds  +\varepsilon^2 (\mu+\lambda)^2\int_0^t\int_{\mathbb{R}_+^2}|\mathcal{Z}^\alpha\partial_y\partial_x v_1|^2 \ d{\bf{x}}ds \\
&+\int_0^t\int_{\mathbb{R}_+^2} |\mathcal{Z}^\alpha (\rho {\bf{v}}\cdot \nabla v_2)|^2\ d{\bf{x}}ds +\int_0^t\int_{\mathbb{R}_+^2} |\mathcal{Z}^\alpha  (b_1\partial_y b_1)|^2\ d{\bf{x}}ds.
\end{split}
\end{equation}
We first consider the second term on the left hand side of \eqref{change 12}.
 By multiplying   the first equation in \eqref{3.1} with $\gamma \rho^{\gamma-1}$ and  applying $\partial_y$ to the  resulting equality, we have
\begin{align*}
-\partial_y^2 v_2=&\gamma^{-1}p^{-1}\partial_t\partial_y p+p^{-1}\partial_y p(\partial_x v_1+\partial_y v_2)+\partial_x\partial_y v_1+\gamma^{-1}p^{-1}\partial_y v_1\partial_x p\notag\\
&+\gamma^{-1}p^{-1}v_1\partial_x\partial_y p+\gamma^{-1}p^{-1}\partial_y v_2 \partial_y p+\gamma^{-1}p^{-1}  v_2 \partial_y^2 p.
\end{align*}
Substitute it into the second term on the left hand side of \eqref{change 12}  to get
 \begin{align}\label{change 20}
 &-2\varepsilon (2\mu+\lambda)\int_0^t\int_{\mathbb{R}_+^2}\mathcal{Z}^\alpha  \partial_y^2  v_2\cdot \mathcal{Z}^\alpha \partial_y p \ d{\bf{x}}ds\notag\\
 =&2\varepsilon (2\mu+\lambda)\gamma^{-1}\int_0^t\int_{\mathbb{R}_+^2}\mathcal{Z}^\alpha  (p^{-1}\partial_t\partial_y p)\cdot \mathcal{Z}^\alpha \partial_y p \ d{\bf{x}}ds\notag\\
 &+2\varepsilon (2\mu+\lambda) \int_0^t\int_{\mathbb{R}_+^2}\mathcal{Z}^\alpha [p^{-1} \partial_y p(\partial_x v_1+\partial_y v_2)]\cdot \mathcal{Z}^\alpha \partial_y p \ d{\bf{x}}ds\notag\\
  &+2\varepsilon (2\mu+\lambda) \int_0^t\int_{\mathbb{R}_+^2}\partial_x\mathcal{Z}^\alpha\partial_y v_1\cdot \mathcal{Z}^\alpha \partial_y p \ d{\bf{x}}ds\notag\\
  &+2\varepsilon (2\mu+\lambda)\gamma^{-1} \int_0^t\int_{\mathbb{R}_+^2}\mathcal{Z}^\alpha(p^{-1}\partial_y v_1\partial_x p)\cdot \mathcal{Z}^\alpha \partial_y p \ d{\bf{x}}ds\notag\\
   &+2\varepsilon (2\mu+\lambda)\gamma^{-1} \int_0^t\int_{\mathbb{R}_+^2}\mathcal{Z}^\alpha(p^{-1} v_1\partial_x \partial_yp)\cdot \mathcal{Z}^\alpha \partial_y p \ d{\bf{x}}ds\notag\\
   &+2\varepsilon (2\mu+\lambda)\gamma^{-1} \int_0^t\int_{\mathbb{R}_+^2}\mathcal{Z}^\alpha(p^{-1}\partial_y v_2  \partial_yp)\cdot \mathcal{Z}^\alpha \partial_y p \ d{\bf{x}}ds\notag\\
   &+2\varepsilon (2\mu+\lambda)\gamma^{-1} \int_0^t\int_{\mathbb{R}_+^2}\mathcal{Z}^\alpha(p^{-1}v_2  \partial_y^2p)\cdot \mathcal{Z}^\alpha \partial_y p \ d{\bf{x}}ds.
 \end{align}
  We separate the first term on the right hand side of \eqref{change 20} into two parts.
 \begin{align}\label{change 21}
 &2\varepsilon (2\mu+\lambda)\gamma^{-1}\int_0^t\int_{\mathbb{R}_+^2}\mathcal{Z}^\alpha  (p^{-1}\partial_t\partial_y p)\cdot \mathcal{Z}^\alpha \partial_y p \ d{\bf{x}}ds\notag\\
 =&2\varepsilon (2\mu+\lambda)\gamma^{-1}\sum_{\beta+\gamma=\alpha\atop |\beta|\geq 1}\int_0^t\int_{\mathbb{R}_+^2}\mathcal{Z}^\beta p^{-1}\partial_t\mathcal{Z}^\gamma\partial_y p\cdot \mathcal{Z}^\alpha \partial_y p \ d{\bf{x}}ds\notag\\
 &+2\varepsilon (2\mu+\lambda)\gamma^{-1} \int_0^t\int_{\mathbb{R}_+^2} p^{-1}\partial_t\mathcal{Z}^\alpha\partial_y p\cdot \mathcal{Z}^\alpha \partial_y p \ d{\bf{x}}ds.
 \end{align}
 By the Sobolev embedding inequality,  the first term can be dealt by
 \begin{align*}
 &2\varepsilon (2\mu+\lambda)\gamma^{-1}\sum_{\beta+\gamma=\alpha\atop |\beta|\geq 1}\int_0^t\int_{\mathbb{R}_+^2}\mathcal{Z}^\beta p^{-1}\partial_t\mathcal{Z}^\gamma\partial_y p\cdot \mathcal{Z}^\alpha \partial_y p \ d{\bf{x}}ds\notag\\
 \lesssim&\varepsilon  \sum_{\beta+\gamma=\alpha\atop |\gamma|\leq|\beta|  }\sup_{0\leq s\leq t}\|\partial_t\mathcal{Z}^\gamma\partial_y p(s)\|_{L_{x}^\infty L_y^2}\left(\int_0^t\|\mathcal{Z}^\beta p^{-1}\|_{L_x^2L_y^\infty}^2\; ds\right)^\frac12\left(\int_0^t\|\mathcal{Z}^\alpha \partial_y p\|_{L_x^2L_y^2}^2\; ds\right)^\frac12\notag\\
 &+\varepsilon  \sum_{\beta+\gamma=\alpha\atop |\beta|< |\gamma|}\|\mathcal{Z}^\beta p^{-1}\|_{L_{t, {\bf{x}}}^\infty  }\left(\int_0^t\|\partial_t\mathcal{Z}^\gamma\partial_y p\|_{L_x^2L_y^2}^2\; ds\right)^\frac12\left(\int_0^t\|\mathcal{Z}^\alpha \partial_y p\|_{L_x^2L_y^2}^2\; ds\right)^\frac12\notag\\
  \lesssim&\varepsilon \left[\|\partial_y p(0)\|_{[(m-1)/2]+3}+\left(\int_0^t\| \partial_y p\|_{[(m-1)/2]+3}^2\; ds\right)^\frac12 \right]\left(\int_0^t\|p^{-1}-1\|_{m-1}^2 \;ds\right)^\frac14\notag\\
 &\cdot\left(\int_0^t\|\partial_y p^{-1}\|_{m-1}^2\; ds\right)^\frac14\left(\int_0^t\|\partial_y p \|_{m-1}^2\; ds\right)^\frac12+\varepsilon\|  p^{-1}\|_{[(m-1)/2], \infty}\int_0^t\|\partial_y p\|_{m-2}^2 \; ds\\
  \lesssim&\varepsilon^2 \left(\|\partial_y p(0)\|_{[(m-1)/2]+3}^2+\int_0^t\| \partial_y p\|_{[(m-1)/2]+3}^2\;  ds \right)\cdot\sum_{j=0}^1\int_0^t\|\partial_y ^j(p^{-1}-1)\|_{m-1}^2 \;ds\notag\\
 &+\varepsilon\|  p^{-1}\|_{[(m-1)/2], \infty}\int_0^t\|\partial_y p\|_{m-2}^2\; ds+\frac18\int_0^t \|\partial_yp\|_{m-1}^2\;ds.
 \end{align*}
 The second term on the right hand side of \eqref{change 21} is solved by
 \begin{align*}
  &2\varepsilon (2\mu+\lambda)\gamma^{-1} \int_0^t\int_{\mathbb{R}_+^2} p^{-1}\partial_t\mathcal{Z}^\alpha\partial_y p\cdot \mathcal{Z}^\alpha \partial_y p \ d{\bf{x}}ds\notag\\
  =&\varepsilon (2\mu+\lambda)\gamma^{-1}\left(\frac{d}{dt}\int_0^t\int_{\mathbb{R}_+^2}p^{-1}|\mathcal{Z}^\alpha \partial_y p |^2\ d{\bf{x}}ds- \int_0^t\int_{\mathbb{R}_+^2}\partial_t p^{-1}|\mathcal{Z}^\alpha \partial_y p |^2\ d{\bf{x}}ds\right)\notag\\
  \ge &\varepsilon (2\mu+\lambda)\gamma^{-1}\int_{\mathbb{R}_+^2}p^{-1}|\mathcal{Z}^\alpha \partial_y p |^2\;d{\bf{x}} -\varepsilon (2\mu+\lambda)\gamma^{-1}\int_{\mathbb{R}_+^2}p_0^{-1}|\mathcal{Z}^\alpha \partial_y p_0 |^2\;d{\bf{x}} \notag\\
  &-C\varepsilon  \| p^{-1}\|_{ 1,\infty}\int_0^t\|\partial_y p\|_{m-1}^2 \;ds.
 \end{align*}
 Then we estimate the second term on the right hand side of \eqref{change 20} in the following way.
 \begin{align*}
  &2\varepsilon (2\mu+\lambda) \int_0^t\int_{\mathbb{R}_+^2}\mathcal{Z}^\alpha [p^{-1} \partial_y p(\partial_x v_1+\partial_y v_2)]\cdot \mathcal{Z}^\alpha \partial_y p \ d{\bf{x}}ds\notag\\
  \lesssim&\varepsilon\Big[\sum_{\beta+\gamma+\iota=\alpha\atop |\gamma|, |\iota|\leq  |\beta|}\sup_{0\leq s\leq t}\|\mathcal{Z}^\gamma \partial_y p(s)\|_{L_{x}^\infty L_y^2}\|(\partial_x \mathcal{Z}^\iota v_1, \mathcal{Z}^\iota\partial_y v_2)\|_{L_{t, {\bf{x}}}^\infty}\left(\int_0^t\|\mathcal{Z}^\beta p^{-1}\|_{L_x^2L_y^\infty}^2\; ds\right)^\frac12\notag\\
  &+ \sum_{\beta+\gamma+\iota=\alpha\atop  |\beta|, |\iota|\leq |\gamma|}\|\mathcal{Z}^\beta p^{-1}\|_{L_{t, {\bf{x}}}^\infty  }\|(\partial_x \mathcal{Z}^\iota v_1, \mathcal{Z}^\iota\partial_y v_2)\|_{L_{t, {\bf{x}}}^\infty}\left(\int_0^t\|\mathcal{Z}^\gamma \partial_y p\|_{L_x^2 L_y^2}^2\; ds\right)^\frac12\notag\\
  &+ \sum_{\beta+\gamma+\iota=\alpha\atop  |\beta|, |\gamma|\leq |\iota|}\|\mathcal{Z}^\beta p^{-1}\|_{L_{t, {\bf{x}}}^\infty  }\sup_{0\leq s\leq t}\|\mathcal{Z}^\gamma \partial_y p(s)\|_{L_{x}^\infty L_y^2}\left(\int_0^t\|(\partial_x \mathcal{Z}^\iota v_1, \mathcal{Z}^\iota\partial_y v_2)\|_{L_{x}^2 L_y^\infty}^2\; ds\right)^\frac12\Big]\notag\\
  &\cdot\left(\int_0^t\|\mathcal{Z}^\alpha\partial_y  p\|_{L_x^2L_y^2}^2\; ds\right)^\frac12\notag\\
   \lesssim&\varepsilon \|(\partial_x v_1, \partial_y v_2, p^{-1})\|_{[(m-1)/2], \infty}\left[\| \partial_y p(0)\|_{[(m-1)/2]+2}+\left(\int_0^t\|\partial_y p\|_{[(m-1)/2]+2}^2 \;ds\right)^\frac12\right]\notag\\
  &\cdot\left(\int_0^t\|(p^{-1}-1, \partial_x v_1, \partial_y v_2)\|_{m-1}^2\; ds\right)^\frac14 \left(\int_0^t\|\partial_y(p^{-1}, \partial_x v_1, \partial_y v_2)\|_{m-1}^2\; ds\right)^\frac14 \notag\\
  &\cdot\left(\int_0^t\| \partial_y  p\|_{m-1}^2 ds\right)^\frac12 +  \varepsilon\|p^{-1}\|_{[(m-1)/2], \infty}\|(\partial_x v_1, \partial_y v_2)\|_{[(m-1)/2], \infty} \int_0^t\|\partial_y p\|_{m-1}^2 \;ds\\
    \lesssim&\varepsilon^2 \|(\partial_x v_1, \partial_y v_2, p^{-1})\|_{[(m-1)/2], \infty}^2\left(
    \| \partial_y p(0)\|_{[(m-1)/2]+2}^2+\int_0^t\|\partial_y p\|_{[(m-1)/2]+2}^2\; ds\right)\notag\\
    &\cdot \left( \int_0^t\| (p^{-1}-1, \partial_x v_1, \partial_y v_2)\|_{m-1}^2 ds\right)^\frac12  \left( \int_0^t\| \partial_y(p^{-1}-1, \partial_x v_1, \partial_y v_2)\|_{m-1}^2 ds\right)^\frac12\\
    &+  \varepsilon\|p^{-1}\|_{[(m-1)/2], \infty}\|(\partial_x v_1, \partial_y v_2)\|_{[(m-1)/2], \infty} \int_0^t\|\partial_y p\|_{m-1}^2 \;ds+\frac18\int_0^t \|\partial_yp\|_{m-1}^2\;ds.
 \end{align*}
 The third term on the right hand side of \eqref{change 20} can be bounded by
  \begin{align*}
  &2\varepsilon (2\mu+\lambda) \int_0^t\int_{\mathbb{R}_+^2}\partial_x\mathcal{Z}^\alpha\partial_y v_1\cdot \mathcal{Z}^\alpha \partial_y p \; d{\bf{x}}ds\notag\\
  \leq &2\varepsilon (2\mu+\lambda)\left(\int_0^t\|\partial_y v_1\|_{m}^2\; ds\right)^\frac12 \left(\int_0^t\|\partial_y p\|_{m-1}^2\; ds\right)^\frac12.
  \end{align*}
 By Lemma \ref{lem 2.2}, we handle the fourth term on the right hand side of \eqref{change 20}  by
 \begin{align*}
 &2\varepsilon (2\mu+\lambda)\gamma^{-1} \int_0^t\int_{\mathbb{R}_+^2}\mathcal{Z}^\alpha(p^{-1}\partial_y v_1\partial_x p)\cdot \mathcal{Z}^\alpha \partial_y p \ d{\bf{x}}ds\\
\le& C\varepsilon^2\|\partial_yv_1\|_{L_{t, {\bf{x}}}^\infty}^2\int_0^t \|p^{-1}\partial_xp\|_{m-1}^2\;ds+C\varepsilon^2\|p^{-1}\partial_xp\|_{L_{t, {\bf{x}}}^\infty}^2\int_0^t \|\partial_yv_1\|_{m-1}^2\;ds+\frac18\int_0^t\| \partial_y p\|_{m-1}^2 ds\\
\le& C\varepsilon^2\|\partial_yv_1\|_{L_{t, {\bf{x}}}^\infty}^2\|(p^{-1}, p)\|_{1, \infty}^2\int_0^t \|(p-1, p^{-1}-1)\|_{m}^2\;ds\\
&+C\varepsilon^2\|(p, p^{-1})\|_{1, \infty}^4\int_0^t \|\partial_yv_1\|_{m-1}^2\;ds+\frac18\int_0^t\| \partial_y p\|_{m-1}^2 \;ds.
 \end{align*}
  For the fifth term on the right hand side of \eqref{change 20}, we divide it into two parts.
  \begin{align}\label{bb}
    &2\varepsilon (2\mu+\lambda)\gamma^{-1} \int_0^t\int_{\mathbb{R}_+^2}\mathcal{Z}^\alpha(p^{-1} v_1\partial_x \partial_yp)\cdot \mathcal{Z}^\alpha \partial_y p \ d{\bf{x}}ds\notag\\
 =&   2\varepsilon (2\mu+\lambda)\gamma^{-1}\sum\limits_{|\beta|\ge 1} \int_0^t\int_{\mathbb{R}_+^2}\mathcal{Z}^\beta (\phi^{-1}v_1)\mathcal{Z}^{\gamma}(p^{-1}\phi\partial_x \partial_yp)\cdot \mathcal{Z}^\alpha \partial_y p \ d{\bf{x}}ds\notag\\
& +2\varepsilon (2\mu+\lambda)\gamma^{-1} \int_0^t\int_{\mathbb{R}_+^2}p^{-1} v_1\mathcal{Z}^\alpha\partial_x \partial_yp\cdot \mathcal{Z}^\alpha \partial_y p \ d{\bf{x}}ds.
    \end{align}
  The first part can be estimated as the fourth term above, and they have the similar bound.
     \begin{align*}
    & 2\varepsilon (2\mu+\lambda)\gamma^{-1}\sum\limits_{|\beta|\ge 1} \int_0^t\int_{\mathbb{R}_+^2}\mathcal{Z}^\beta(\phi^{-1} v_1)\mathcal{Z}^{\gamma}(p^{-1}\phi\partial_x \partial_yp)\cdot \mathcal{Z}^\alpha \partial_y p \ d{\bf{x}}ds\\
\le&C \varepsilon^2\|\partial_yv_1\|_{1, \infty}^2\| (p^{-1}, p)\|_{1, \infty}^2\int_0^t \|(p-1, p^{-1}-1)\|_{m-1}^2\;ds+C\varepsilon^2\|(p, p^{-1})\|_{1, \infty}^4\int_0^t \|\partial_yv_1\|_{m-1}^2\;ds\notag\\
&+\frac18\int_0^t\| \partial_y p\|_{m-1}^2 \;ds.
 \end{align*}
 As for the second part of \eqref{bb}, by integration by parts, we have
 \begin{align*}
 &2\varepsilon (2\mu+\lambda)\gamma^{-1} \int_0^t\int_{\mathbb{R}_+^2}p^{-1} v_1\mathcal{Z}^\alpha\partial_x \partial_yp\cdot \mathcal{Z}^\alpha \partial_y p \ d{\bf{x}}ds\\
 =&-2\varepsilon (2\mu+\lambda)\gamma^{-1} \int_0^t\int_{\mathbb{R}_+^2}\partial_x(p^{-1} v_1)\left(\mathcal{Z}^\alpha \partial_yp\right)^2 \;d{\bf{x}}ds\\
 \lesssim&\varepsilon\|(p^{-1}, v_1)\|_{1, \infty}^2\int_0^t \|\partial_yp\|_{m-1}^2\;ds.
 \end{align*}
 For the sixth term on the right hand side of \eqref{change 20}, we have
  \begin{align*}
  &2\varepsilon (2\mu+\lambda)\gamma^{-1} \int_0^t\int_{\mathbb{R}_+^2}\mathcal{Z}^\alpha(p^{-1}\partial_y v_2  \partial_yp)\cdot \mathcal{Z}^\alpha \partial_y p \ d{\bf{x}}ds\notag\\
  \lesssim&\varepsilon\Big[\sum_{\beta+\gamma+\iota=\alpha\atop   |\gamma|,|\iota|\leq |\beta|}\|\mathcal{Z}^\gamma\partial_y v_2\|_{L_{t, {\bf{x}}}^\infty}\sup_{0\leq s\leq t}\|\mathcal{Z}^\iota\partial_y p(s)\|_{L_{x}^\infty L_y^2}\left(\int_0^t \|\mathcal{Z}^\beta p^{-1}\|_{L_x^2 L_y^\infty}^2 ds\right)^\frac12 \notag\\
  &+ \sum_{\beta+\gamma+\iota=\alpha\atop |\beta|,|\iota| \leq  |\gamma|}\|\mathcal{Z}^\beta p^{-1}\|_{L_{t, {\bf{x}}}^\infty}\sup_{0\leq s\leq t}\|\mathcal{Z}^\iota\partial_y p(s)\|_{L_{x}^\infty L_y^2}\left(\int_0^t \|\mathcal{Z}^\gamma\partial_y v_2\|_{L_x^2 L_y^\infty}^2 ds\right)^\frac12 \notag\\
  &+ \sum_{\beta+\gamma+\iota=\alpha\atop |\beta|, |\gamma|\leq |\iota|<|\alpha|}\|\mathcal{Z}^\beta p^{-1}\|_{L_{t, {\bf{x}}}^\infty}\|\mathcal{Z}^\gamma \partial_y v_2\|_{L_{t, {\bf{x}}}^\infty  }\left(\int_0^t \|\mathcal{Z}^\iota\partial_y p\|_{L_x^2 L_y^2} ^2  ds\right)^\frac12\Big]\notag\cdot\left(\int_0^t\|\mathcal{Z}^\alpha \partial_y p\|_{L_x^2 L_y^2} ^2  ds\right)^\frac12\notag\\
  \le&\frac18\int_0^t\| \partial_y p\|_{m-1}^2\; ds+\frac{\varepsilon^2}{8}\int_0^t\|\partial_y^2 v_2\|_{m-1}^2 \;ds +C\varepsilon^2\|  \partial_y v_2\|_{[(m-1)/2], \infty}^2
  \left(\|\partial_y p(0)\|_{[(m-1)/2]+2}^2\right.\notag\\
  &\left.+\int_0^t \|\partial_y p\|_{[(m-1)/2]+2}^2\;ds \right) \cdot\sum_{j=0}^1\int_0^t\| \partial_y^j (p^{-1}-1)\|_{m-1}^2\; ds\notag\\
  &+\varepsilon^2\|p\|_{[(m-1)/2], \infty}^4\left[ \|\partial_y p(0)\|_{[(m-1)/2]+2}^4+\left(\int_0^t\|\partial_y p\|_{[(m-1)/2]+2}^2\ ds\right)^2\right]\int_0^t\|\partial_y v_2\|_{m-1}^2 \;ds.
  \end{align*}
 Next we consider the last term on the right hand side of \eqref{change 20} and separate it into three terms.
 \begin{align}\label{cc}
& 2\varepsilon (2\mu+\lambda)\gamma^{-1} \int_0^t\int_{\mathbb{R}_+^2}\mathcal{Z}^\alpha(p^{-1}v_2  \partial_y^2p)\cdot \mathcal{Z}^\alpha \partial_y p \ d{\bf{x}}ds\notag\\
=& 2\varepsilon (2\mu+\lambda)\gamma^{-1} \int_0^t\int_{\mathbb{R}_+^2}p^{-1}v_2\partial_y\mathcal{Z}^\alpha\partial_yp\cdot \mathcal{Z}^\alpha \partial_y p \ d{\bf{x}}ds\notag\\
&+ 2\varepsilon (2\mu+\lambda)\gamma^{-1} \int_0^t\int_{\mathbb{R}_+^2}p^{-1}v_2[\mathcal{Z}^\alpha, \partial_y]\partial_yp\cdot \mathcal{Z}^\alpha \partial_y p \ d{\bf{x}}ds\notag\\
&+2\varepsilon (2\mu+\lambda)\gamma^{-1}\sum\limits_{|\beta|\ge 1} \int_0^t\int_{\mathbb{R}_+^2}\mathcal{Z}^\beta(p^{-1}v_2 )\mathcal{Z}^\gamma \partial_y^2p\cdot \mathcal{Z}^\alpha \partial_y p \;d{\bf{x}}ds.
 \end{align}
 The first one can be estimated by integration by parts.
 \begin{align*}
 & 2\varepsilon (2\mu+\lambda)\gamma^{-1} \int_0^t\int_{\mathbb{R}_+^2}p^{-1}v_2\partial_y\mathcal{Z}^\alpha\partial_yp\cdot \mathcal{Z}^\alpha \partial_y p \ d{\bf{x}}ds\\
 \lesssim& \varepsilon\left(\|\phi\partial_yp^{-1}\phi^{-1}v_2\|_{L_{t, {\bf{x}}}^\infty}+\| p^{-1}\partial_yv_2\|_{L_{t, {\bf{x}}}^\infty}\right)\int_0^t \|\partial_y p\|_{m-1}^2\;ds\\
 \lesssim& \varepsilon\|p^{-1}\|_{1, \infty}\|\partial_yv_2\|_{L_{t, {\bf{x}}}^\infty}\int_0^t \|\partial_y p\|_{m-1}^2\;ds.
 \end{align*}
 Then we estimate the second part of \eqref{cc}. By Lemma \ref{L2}, one has
 \begin{align*}
 &2\varepsilon (2\mu+\lambda)\gamma^{-1} \int_0^t\int_{\mathbb{R}_+^2}p^{-1}v_2[\mathcal{Z}^\alpha, \partial_y]\partial_yp\cdot \mathcal{Z}^\alpha \partial_y p \ d{\bf{x}}ds\\
 \lesssim&\varepsilon  \sum_{k=0}^{m-2}\int_0^t\int_{\mathbb{R}_+^2}p^{-1}v_2\phi^{k,m}(y)\partial_y\mathcal{Z}_y^k\partial_yp\cdot \mathcal{Z}^\alpha \partial_y p \ d{\bf{x}}ds\\
 \lesssim&\varepsilon \|p^{-1}\|_{L_{t, {\bf{x}}}^\infty}\|\partial_yv_2\|_{L_{t, {\bf{x}}}^\infty}\int_0^t \|\partial_yp\|_{m-1}^2\;ds.
 \end{align*}
 By using the Hardy trick, we find the last part of \eqref{cc} can be estimated as the sixth term of \eqref{change 20}.
 \begin{align*}
 &2\varepsilon (2\mu+\lambda)\gamma^{-1}\sum\limits_{|\beta|\ge 1} \int_0^t\int_{\mathbb{R}_+^2}\mathcal{Z}^\beta(p^{-1}v_2 )\mathcal{Z}^\gamma \partial_y^2p\cdot \mathcal{Z}^\alpha \partial_y p \;d{\bf{x}}ds\\
 \lesssim &\varepsilon \sum\limits_{|\beta|\ge 1} \int_0^t\int_{\mathbb{R}_+^2}\phi^{-1}\mathcal{Z}^\beta(p^{-1}v_2 )\phi\mathcal{Z}^\gamma \partial_y^2p\cdot \mathcal{Z}^\alpha \partial_y p \;d{\bf{x}}ds\\
 \le& \frac18\int_0^t\| \partial_y p\|_{m-1}^2 ds+ \frac{\varepsilon^2}{8}\int_0^t\| \partial_y^2 v_2\|_{m-1}^2 ds +C\varepsilon^2\|p^{-1}\|_{L_{t, {\bf{x}}}^\infty}^2\|\partial_y v_2\|_{L_{t, {\bf{x}}}^\infty}^2\int_0^t\|\partial_y p\|_{m-1}^2\; ds\notag\\
 &+C\varepsilon^2\|\partial_y v_2\|_{L_{t, {\bf{x}} }^\infty}^2\left(\|\partial_y p(0)\|_{2}^2+\int_0^t\|\partial_y p\|_{2}^2 \;ds\right)\left( \int_0^t\| p^{-1}-1\|_{m-1}^2\; ds\right)^\frac12\left( \int_0^t\|\partial_y p\|_{m-1}^2 \;ds\right)^\frac12\notag\\
&+C\varepsilon^4\| p^{-1}\|_{L_{t, {\bf{x}}}^\infty}^4\left(\|\partial_y p(0)\|_{2}^4+\int_0^t\|\partial_y p\|_{2}^4 \;ds\right)  \int_0^t\|  \partial_y v_2\|_{m-1}^2 ds.
 \end{align*}
  Finally, for the terms on the right hand side of \eqref{change 12}, we can get the following bound directly.
  \begin{align*}
  &\Big(1+\|(\rho, {\bf{v}}, b_1, \tilde{b}_2, \partial_y v_2)\|_{1, \infty}^2\Big)^2\int_0^t\|(\rho,  {\bf{v}},  b_1, \tilde{b}_2)\|_{m}^2 \;ds+\varepsilon^2\mu^2\int_0^t\|\partial_x v_2\|_{m}^2\; ds\notag\\
  &+\varepsilon^2(\mu+\lambda)^2\int_0^t\|\partial_y v_1\|_{m}^2 \;ds +\|b_1\|_{L_{t, {\bf{x}}}^\infty}^2\int_0^t\|\partial_y b_1\|_{m-1}^2 \;ds\notag\\
  &+\left( \|\partial_y b_1(0)\|_{2}^2+\int_0^t\|\partial_y b_1\|_{2}^2\; ds\right)\left(\int_0^t\| b_1\|_{m-1}^2 ds+\int_0^t\|    \partial_y b_1\|_{m-1}^2 \;ds\right).
  \end{align*}
Summing up all the estimates together, we conclude by induction that
\begin{align*}
&\int_0^t\int_{\mathbb{R}_+^2}\left( |\mathcal{Z}^\alpha \partial_y p |^2+ \varepsilon ^2(2\mu+\lambda)^2 |\mathcal{Z}^\alpha \partial_y^2 v_2|^2\right)  \ d{\bf{x}}ds+\varepsilon (2\mu+\lambda)\gamma^{-1}\int_{\mathbb{R}_+^2}p^{-1}|\mathcal{Z}^\alpha \partial_y p |^2\;ds \\
\lesssim &\varepsilon (2\mu+\lambda)\gamma^{-1}\int_{\mathbb{R}_+^2}p_0^{-1}|\mathcal{Z}^\alpha \partial_y p_0 |^2\;ds+\varepsilon^2\int_0^t\|\partial_x v_2\|_{m}^2\; ds+\varepsilon^2\int_0^t\|\partial_y v_1\|_{m}^2 \;ds\notag\\
&+\left(1+\|(p, p^{-1},  {\bf{v}}, {\bf{B}}-\overset{\rightarrow}{e_y}, \partial_y v_2)\|_{[(m-1)/2], \infty}^2+\varepsilon^2\|\partial_y v_1\|_{L_{t, {\bf{x}}}^\infty} ^2\right)^2\int_0^t\|(p-1, \rho-1, {\bf{v}},  {\bf{B}}-\overset{\rightarrow}{e_y})\|_{m}^2 \;ds \notag\\
&+\|b_1\|_{L_{t, {\bf{x}}}^\infty}^2\int_0^t\|\partial_y b_1\|_{m-1}^2 \;ds +\varepsilon^2\left(1+\|(p^{-1}, v_1, \partial_y v_2, p)\|_{[(m-1)/2]+1, \infty}^2\right)^2\int_0^t\|(\partial_y p, \partial_y v_1)\|_{m-1}^2 \;ds\notag\\
&+\varepsilon^2\left(1+\|(p, p^{-1}, \partial_x v_1, \partial_y v_2)\|_{[(m-1)/2], \infty}\right)^2\left(1+\|(\partial_y p, \partial_y b_1)(0)\|_{[(m-1)/2]+3}^2\right.\notag\\
&\left.+\int_0^t\|(\partial_y p, \partial_y b_1)\|_{[(m-1)/2]+3}^2\; ds\right)^2 \cdot\sum_{j=0}^1\int_0^t\|\partial_y^j( \partial_x  v_1,  v_2, b_1,   p^{-1}-1,  p-1 )\|_{m-1}^2 \; ds.
\end{align*}

\subsection{Conormal Estimate of $  \partial_y^2 v_2$}   The conormal estimates of $\partial_y^2 v_2$ are derived in this part to control $\|\partial_yv_2\|_{L_{t, {\bf{x}}}^\infty}$.  Similar  to  \eqref{change 8}, we have
\begin{align}\label{4.25}
\int_0^t\int_{\mathbb{R}_+^2}|\mathcal{Z}^\alpha\partial_y^{2}  v_2|^2 \;d{\bf{x}}ds
\lesssim& \int_0^t\int_{\mathbb{R}_+^2}|\mathcal{Z}^\alpha\partial_y \partial_x v_1|^2\;d{\bf{x}}ds+ \int_0^t\int_{\mathbb{R}_+^2}\left|\mathcal{Z}^\alpha\partial_y ( p^{-1}\partial_t p)\right|^2\;d{\bf{x}}ds\notag\\
&+ \int_0^t\int_{\mathbb{R}_+^2}\left| \mathcal{Z}^\alpha\partial_y (p^{-1}{\bf{v}}\cdot\nabla p)\right|^2 d{\bf{x}}ds.
\end{align}
For the first and the second terms on the right hand side of \eqref{4.25}, one has
\begin{align*}
\int_0^t\int_{\mathbb{R}_+^2}|\mathcal{Z}^\alpha\partial_y \partial_x v_1|^2\; d{\bf{x}}ds\lesssim\int_0^t \|\partial_y  v_1\|_{m-1}^2\; ds,
\end{align*}
and
\begin{align*}
&\int_0^t\int_{\mathbb{R}_+^2}|\mathcal{Z}^\alpha\partial_y (  p^{-1}\partial_t p)|^2\; d{\bf{x}}ds\notag\\
\lesssim  &\sup_{0\leq s\leq t}\|(\partial_y p^{-1}, \partial_t\partial_y p)(s)\|_{L_{  x}^\infty L_y^2}^2\int_0^t \|(\mathcal{Z}^\alpha\partial_t p,  \mathcal{Z}^\alpha p^{-1})\|_{L_{x}^2L_y^\infty}^2  \;ds\notag\\
&+\|(\partial_t p, p^{-1})\|_{L_{t, {\bf{x}}}^\infty}^2\int_0^t \|(\mathcal{Z}^\alpha \partial_y p^{-1}, \mathcal{Z}^\alpha \partial_t\partial_y p)\|_{L_x^2 L_y^2}^2\;ds\notag\\
\lesssim&\left(\|(\partial_y p^{-1}, \partial_y p)(0)\|_{[m/2]+2}^2+\int_0^t\|(\partial_y p^{-1}, \partial_y p)\|_{[m/2]+2}^2\; ds\right)\left(\int_0^t\|(p-1, p^{-1}-1)\|_{m-1}^2\; ds\right)^\frac12\notag\\
&\cdot\left(\int_0^t\|\partial_y(p, p^{-1})\|_{m-1}^2\; ds\right)^\frac12+\|(p, p^{-1})\|_{1, \infty}^2\int_0^t\|(\partial_y p, \partial_y p^{-1})\|_{m-1}^2 \;ds.
\end{align*}
 Then we write  the third term on the right hand side of \eqref{4.25} as
\begin{equation}\label{4.26}
\begin{split}
&\int_0^t\int_{\mathbb{R}_+^2}| \mathcal{Z}^\alpha\partial_y (p^{-1}{\bf{v}}\cdot \nabla p)|^2\;d{\bf{x}}ds\\
\lesssim&\int_0^t\int_{\mathbb{R}_+^2}| \mathcal{Z}^\alpha(\partial_y p^{-1} v_1 \partial_x p)|^2\;d{\bf{x}}ds+\int_0^t\int_{\mathbb{R}_+^2}| \mathcal{Z}^\alpha(\partial_y p^{-1} v_2 \partial_y p)|^2\;d{\bf{x}}ds\\
&+\int_0^t\int_{\mathbb{R}_+^2}| \mathcal{Z}^\alpha  (p^{-1}\partial_y{\bf{v}}\cdot\nabla p)|^2\;d{\bf{x}}ds+\int_0^t\int_{\mathbb{R}_+^2}| \mathcal{Z}^\alpha( p^{-1}v_1\partial_x\partial_y p)|^2\;d{\bf{x}}ds\\
&+\int_0^t\int_{\mathbb{R}_+^2}| \mathcal{Z}^\alpha  (p^{-1}v_2\partial_y^2 p)|^2\;d{\bf{x}}ds.
\end{split}
\end{equation}
The first term on the right hand side of \eqref{4.26} is dealt by
\begin{align*}
&\int_0^t\int_{\mathbb{R}_+^2}| \mathcal{Z}^\alpha(\partial_y p^{-1} v_1 \partial_x p)|^2\;d{\bf{x}}ds\notag\\
\lesssim&\sum_{\beta+\gamma+\iota=\alpha\atop   |\gamma|,|\iota|\leq |\beta|}\|\mathcal{Z}^\gamma v_1\|_{L_{t, {\bf{x}}}^\infty}^2\|\mathcal{Z}^\iota \partial_x p\|_{L_{t, {\bf{x}}}^\infty}^2\int_0^t\|\mathcal{Z}^\beta \partial_y p^{-1}\|_{L_x^2L_y^2}^2 \;ds\notag\\
&+\sum_{\beta+\gamma+\iota=\alpha\atop  |\beta|,|\iota|\leq  |\gamma|}\|\phi\mathcal{Z}^\beta \partial_y p^{-1}\|_{L_{t,{\bf{x}}} ^\infty  }^2\|\mathcal{Z}^\iota \partial_x p\|_{L_{t, {\bf{x}}}^\infty}^2\int_0^t\|\phi^{-1}\mathcal{Z}^\gamma v_1\|_{L_x^2L_y^2}^2 \;ds\notag\\
&+\sum_{\beta+\gamma+\iota=\alpha\atop   |\beta|,|\gamma|\leq |\iota|}\sup_{0\leq s\leq t}\| \mathcal{Z}^\beta \partial_y p^{-1}(s)\|_{L_{x} ^\infty L_y^2 }^2\| \mathcal{Z}^\gamma v_1\|_{L_{t, {\bf{x}}}^\infty}^2\int_0^t\|\mathcal{Z}^\iota \partial_x p\|_{L_{x}^2L_y^\infty}^2\; ds\notag\\
\lesssim&\|(v_1, p^{-1})\|_{[m/2], \infty}^2\|p\|_{[m/2], \infty}^2\int_0^t\|(\partial_y v_1, \partial_y p^{-1})\|_{m-2}^2 \;ds\notag\\
&+\|v_1\|_{[m/2], \infty}^2\left( \|\partial_y p^{-1}(0)\|_{[m/2]+ 2}^2+\int_0^t\|\partial_y p^{-1}\|_{[m/2]+2}^2\; ds\right)\notag\\
&\cdot\left(\int_0^t\| p-1\|_{m-1}^2\; ds\right)^\frac12\left(\int_0^t\| \partial_y p\|_{m-1}^2\; ds\right)^\frac12.
\end{align*}

 By the Sobolev embedding inequality, we estimate the second term on the right hand side of \eqref{4.26}   by
 \begin{align*}
 &\int_0^t\int_{\mathbb{R}_+^2}| \mathcal{Z}^\alpha(\partial_y p^{-1} v_2 \partial_y p)|^2\;d{\bf{x}}ds\notag\\
 \lesssim&\sum_{\beta+\gamma+\iota=\alpha\atop  |\gamma|,|\iota|\leq  |\beta|}\|\phi^{-1}\mathcal{Z}^\gamma v_2\|_{L_{t, {\bf{x}}}^\infty}^2\|\phi\mathcal{Z}^\iota \partial_y p\|_{L_{t, {\bf{x}}}^\infty}^2\int_0^t\|\mathcal{Z}^\beta \partial_y p^{-1}\|_{L_x^2 L_y^2}^2 \;ds\notag\\
 &+\sum_{\beta+\gamma+\iota=\alpha\atop   |\beta|,|\iota|\leq |\gamma|}\|\phi\mathcal{Z}^\beta \partial_y p^{-1}\|_{L_{t,{\bf{x}}} ^\infty  }^2\sup_{0\leq s\leq t}\|\mathcal{Z}^\iota \partial_y p(s)\|_{L_{x}^\infty L_y^2}^2\int_0^t\|\phi^{-1}\mathcal{Z}^\gamma v_2\|_{L_{x}^2L_y^\infty}^2\; ds\notag\\
 &+\sum_{\beta+\gamma+\iota=\alpha\atop  |\beta|,|\gamma|\leq  |\iota|}\|\phi \mathcal{Z}^\beta \partial_y p^{-1}\|_{L_{t, {\bf{x}}} ^\infty   }^2\|\phi^{-1} \mathcal{Z}^\gamma v_2\|_{L_{t, {\bf{x}}}^\infty}^2\int_0^t\|\mathcal{Z}^\iota \partial_y p\|_{L_x^2 L_y^2}^2 \;ds\notag\\
 \le&C\|(p^{-1}, \partial_y v_2)\|_{[m/2], \infty}^2\|(p, \partial_y v_2)\|_{[m/2], \infty}^2\int_0^t\|(\partial_y  p^{-1}, \partial_y p)\|_{m-2}^2 \;ds\notag\\
 &+ \|p^{-1}\|_{[m/2], \infty}^4\left(  \|\partial_y  p(0)\|_{[m/2]+1}^2+\int_0^t\|\partial_y  p\|_{[m/2]+1}^2\; ds\right)^2\int_0^t\|\partial_y v_2\|_{m-2}^2 \;ds\notag\\
 &+\frac16\int_0^t\|\partial_y^2 v_2\|_{m-2}^2 \;ds.
 \end{align*}
 The third term on the right hand side of \eqref{4.26} is handled by
 \begin{align*}
 &\int_0^t\int_{\mathbb{R}_+^2}| \mathcal{Z}^\alpha  (p^{-1}\partial_y{\bf{v}}\cdot\nabla p)|^2\; d{\bf{x}}ds\notag\\
 \lesssim&\sum_{\beta+\gamma+\iota=\alpha\atop
  |\gamma|, |\iota|\leq |\beta|}\sup_{0\leq s\leq t}\|\mathcal{Z}^\gamma \partial_y v_1(s)\|_{L_{x}^\infty L_y^2}^2\|\mathcal{Z}^\iota \partial_x p\|_{L_{t, {\bf{x}}}^\infty}^2\int_0^t\|\mathcal{Z}^\beta p^{-1}\|_{L_{x}^2L_y^\infty}^2 \; ds\notag\\
&+\sum_{\beta+\gamma+\iota=\alpha\atop
  |\gamma|, |\iota|\leq |\beta|}\|\mathcal{Z}^\gamma \partial_y v_2\|_{L_{t,{\bf{x}} }^\infty  }^2\sup_{0\leq s\leq t}\|\mathcal{Z}^\iota \partial_y p(s)\|_{L_{ x}^\infty L_y^2}^2\int_0^t\|\mathcal{Z}^\beta p^{-1}\|_{L_{x}^2L_y^\infty}^2 \;ds\notag\\
&+\sum_{\beta+\gamma+\iota=\alpha\atop
  |\beta|, |\iota|\leq |\gamma|}\|\mathcal{Z}^\beta p^{-1}\|_{L_{t,{\bf{x}} }^\infty }^2\|\mathcal{Z}^\iota \partial_x p\|_{L_{t, {\bf{x}}}^\infty}^2\int_0^t\|\mathcal{Z}^\gamma \partial_y v_1\|_{L_x^2L_y^2}^2 \;ds\notag\\
&+\sum_{\beta+\gamma+\iota=\alpha\atop
  |\beta|, |\iota|\leq 	|\gamma|}\|\mathcal{Z}^\beta p^{-1}\|_{L_{t,{\bf{x}} }^\infty }^2\sup_{0\leq s\leq t}\|\mathcal{Z}^\iota \partial_y p(s)\|_{L_{x}^\infty L_y^2}^2\int_0^t\|\mathcal{Z}^\gamma \partial_y v_2\|_{L_{x}^2L_y^\infty}^2 \;ds\notag\\
&+\sum_{\beta+\gamma+\iota=\alpha\atop
  |\beta|, |\gamma|\leq 	|\iota|}\|\mathcal{Z}^\beta p^{-1}\|_{L_{t,{\bf{x}} }^\infty }^2\sup_{0\leq s\leq t}\|\mathcal{Z}^\gamma \partial_y v_1(s)\|_{L_{x}^\infty L_y^2}^2\int_0^t\|\mathcal{Z}^\iota \partial_x p\|_{L_{x}^2L_y^\infty}^2 \;ds\notag\\
&+\sum_{\beta+\gamma+\iota=\alpha\atop
  |\beta|, |\gamma|\leq 	|\iota|}\|\mathcal{Z}^\beta p^{-1}\|_{L_{t,{\bf{x}} }^\infty }^2\|\mathcal{Z}^\gamma \partial_y v_2\|_{L_{t,  {\bf{x}}}^\infty  }^2\int_0^t\|\mathcal{Z}^\iota \partial_y p\|_{L_{x}^2L_y^2}^2 \;ds\notag\\
\lesssim&\|(p^{-1}, p, \partial_yv_2)\|_{[m/2], \infty}^2\left( \| (\partial_y v_1, \partial_y p)(0)\|_{[m/2]+1}^2+\int_0^t\|(\partial_y v_1, \partial_y p)\|_{[m/2]+1}^2 \;ds\right)\notag\\
&\cdot\left(\int_0^t\|(p^{-1}-1, \partial_y v_2, \partial_x p)\|_{m-2}^2\; ds\right)^\frac12\left(\int_0^t\|(\partial_y p^{-1}, \partial_y^2 v_2, \partial_x \partial_y p)\|_{m-2}^2\; ds\right)^\frac12\notag\\
&+\|p^{-1}\|_{[m/2], \infty}^2\|(p, \partial_y v_2)\|_{[m/2], \infty}^2\int_0^t\|(\partial_y v_1, \partial_y p)\|_{m-2}^2 \;ds.
 \end{align*}

For the fourth   term on the right hand side of \eqref{4.26}, we have
\begin{align*}
&\int_0^t\int_{\mathbb{R}_+^2}| \mathcal{Z}^\alpha( p^{-1}v_1\partial_x\partial_y p)|^2\;d{\bf{x}}ds\\
\lesssim&\sum_{\beta+\gamma+\iota=\alpha\atop  |\gamma|, |\iota|\leq |\beta|}\|\mathcal{Z}^\gamma v_1\|_{L_{t, {\bf{x}}}^\infty}^2\sup_{0\leq s\leq t}\|\mathcal{Z}^\iota \partial_x\partial_y p(s)\|_{L_{  x}^\infty L_y^2}^2\int_0^t\|\mathcal{Z}^\beta p^{-1}\|_{L_x^2L_y^\infty}^2\; ds\notag\\
&+\sum_{\beta+\gamma+\iota=\alpha\atop  |\beta|, |\iota|\leq |\gamma|}\|\mathcal{Z}^\beta p^{-1}\|_{L_{t, {\bf{x}}}^\infty}^2\sup_{0\leq s\leq t}\|\mathcal{Z}^\iota \partial_x\partial_y p(s)\|_{L_{ x}^\infty L_y^2}^2\int_0^t\|\mathcal{Z}^\gamma v_1\|_{L_x^2L_y^\infty}^2\; ds\notag\\
&+\sum_{\beta+\gamma+\iota=\alpha\atop |\beta|, |\gamma|\leq  |\iota|}\|\mathcal{Z}^\beta p^{-1}\|_{L_{t, {\bf{x}}}^\infty}^2\|\mathcal{Z}^\gamma v_1\|_{L_{t, {\bf{x}}}^\infty  }^2\int_0^t\|\mathcal{Z}^\iota \partial_x\partial_y p\|_{L_x^2L_y^2}^2 \;ds\notag\\
\lesssim& \|(v_1, p^{-1})\|_{[m/2], \infty}^2\left( \|\partial_y p(0)\|_{[m/2]+2}^2+\int_0^t\|\partial_y p\|_{[m/2]+2}^2 ds\right) \left(\int_0^t\|(p^{-1}-1, v_1)\|_{m-2}^2 ds\right)^\frac12\notag\\
&\cdot\left(\int_0^t\|(\partial_y p^{-1}, \partial_y v_1)\|_{m-2}^2 ds\right)^\frac12 +\| p^{-1}\|_{[m/2], \infty}^2\|  v_1\|_{[m/2], \infty}^2\int_0^t\|\partial_y p\|_{m-1}^2 \;ds.
\end{align*}

 The last term on the right hand side of \eqref{4.26} is dealt by
 \begin{align*}
 &\int_0^t\int_{\mathbb{R}_+^2}| \mathcal{Z}^\alpha  (p^{-1}v_2\partial_y^2 p)|^2\;d{\bf{x}}ds\notag\\
 \lesssim&\sum_{\beta+\gamma+\iota=\alpha \atop  |\gamma|, |\iota|\leq |\beta|}\|\phi^{-1}\mathcal{Z}^\gamma v_2\|_{L_{t, {\bf{x}}}^\infty}^2\sup_{0\leq s\leq t}\|\phi \mathcal{Z}^\iota \partial_y^2 p(s)\|_{L_{x}^\infty L_y^2}^2\int_0^t\|\mathcal{Z}^\beta p\|_{L_x^2L_y^\infty}^2\; ds\notag\\
 &+\sum_{\beta+\gamma+\iota=\alpha \atop |\beta|, |\iota|\leq  |\gamma|}\|\mathcal{Z}^\beta p\|_{L_{t, {\bf{x}}}^\infty}^2\sup_{0\leq s\leq t}\|\phi \mathcal{Z}^\iota \partial_y^2 p(s)\|_{L_{ x}^\infty L_y^2}^2\int_0^t\|\phi^{-1}\mathcal{Z}^\gamma v_2\|_{L_x^2L_y^\infty}^2\; ds\notag\\
 &+\sum_{\beta+\gamma+\iota=\alpha \atop |\beta|, |\gamma|\leq  |\iota|}\|\mathcal{Z}^\beta p\|_{L_{t, {\bf{x}}}^\infty}^2\|\phi^{-1}\mathcal{Z}^\gamma v_2\|_{L_{t, {\bf{x}}}^\infty }^2\int_0^t\|\phi \mathcal{Z}^\iota \partial_y^2 p\|_{L_x^2L_y^2 }^2 \;ds\notag\\
 \lesssim& \|(p, \partial_y v_2)\|_{[m/2], \infty}^2\left( \|\partial_y p(0)\|_{[m/2]+2}^2+\int_0^t\|\partial_y p\|_{[m/2]+2}^2 ds\right) \left(\int_0^t\|(p-1, \partial_y v_2)\|_{m-2}^2\; ds\right)^\frac12\notag\\
 &\cdot\left(\int_0^t\|(\partial_y p, \partial_y^2 v_2)\|_{m-2}^2\; ds\right)^\frac12 +\|p\|_{[m/2], \infty}^2\|\partial_y v_2\|_{[m/2], \infty}^2\int_0^t\|\partial_y p\|_{m-1}^2 \;ds.
 \end{align*}
 Thus, we conclude by H\"older's inequality that
 \begin{align*}
 \int_0^t \|\partial_y^2v_2\|_{m-2}^2\;ds
 \lesssim&\left(1+ \|({\bf{v}}, p, p^{-1}, \partial_y v_2)\|_{[m/2], \infty}^2\right)^2\int_0^t\|(\partial_y v_1, \partial_y p, \partial_y p^{-1})\|_{m-1}^2 \;ds\notag\\
 &+\left(1+\|({\bf{v}}, p, p^{-1}, \partial_y v_2)\|_{[m/2], \infty}^2\right)^2 \left(1+\| (\partial_y v_1, \partial_y p, \partial_y p^{-1})(0)\|_{ [m/2]+2}^2\right.\notag\\
 &\left.+\int_0^t\| (\partial_y v_1, \partial_y p, \partial_y p^{-1})\|_{[m/2]+2}^2\ ds\right)^2\cdot\sum_{j=0}^1\int_0^t\|\partial_y^j( {\bf{v}}, p-1, p^{-1}-1)\|_{m-1}^2 \;ds.
 \end{align*}

\section{Proof of Theorem $\ref{Th1}$}
 Now we prove Theorem \ref{Th1}. According to the  estimate in Section 3 and Section  4, one has
\begin{align}\label{5.1}
& N_m(t)+  \varepsilon(2\mu+\lambda)\gamma^{-1} \sum_{|\alpha|+i\leq  m\atop i=1,2}\int_{\mathbb{R}_+^2}
p^{-1}(t)|\mathcal{Z}^\alpha \partial_y^i p(t)|^2 \;d{\bf{x}} \notag\\
\lesssim&\;N_m(0)+ \varepsilon(2\mu+\lambda) \gamma^{-1} \sum_{|\alpha|+i\leq m\atop i=1,2}\int_{\mathbb{R}_+^2}
p_0^{-1}|\mathcal{Z}^\alpha \partial_y^i p_0|^2\; d{\bf{x}}\notag\\
&+\left\{ \left(1+\|(p, {\bf{v}}, {\bf{B}}-\overset{\rightarrow}{e_y}, \partial_yv_2)\|_{[m/2]+1, \infty}^2\right)^3+\left(1+\|(p, {\bf{v}}, \partial_y v_2)\|_{[m/2], \infty}^2\right)^2\right.\notag\\
&\left.\cdot\left(1+\|(\partial_y {\bf{v}}, \partial_y p, \partial_y {\bf{B}})(0)\|_{[(m-1)/2]+3}^2+\int_0^t\|(\partial_y {\bf{v}}, \partial_y p, \partial_y {\bf{B}})\|_{[(m-1)/2]+3}^2\;ds\right)^2\right.\notag\\
&\left.+\varepsilon\|(v_1, \partial_yv_1, \partial_yb_1)\|_{2, \infty}^2  \left(1+\|(\rho, p^{-1}, b_1, \partial_y v_2)\|_{L_{t, {\bf{x}}}^\infty}^2\right)\right\} \cdot\sum_{j=0}^1 \int_0^t \|\partial_y^j({\bf{v}}, {\bf{B}}-\overset{\rightarrow}{e_y}, p-1)\|_{m-j}^2  \; ds.
\end{align}
By Lemma \ref{lem 2.2}, we have
\begin{align*}
&\|(p, {\bf{v}}, {\bf{B}}-\overset{\rightarrow}{e_y}, \partial_yv_2)\|_{[m/2]+1, \infty}^2\\
\lesssim&\|(p-1, {\bf{v}}, {\bf{B}}-\overset{\rightarrow}{e_y}, \partial_yv_2)(0)\|_{[m/2]+3}^2
+\|\partial_y(p, {\bf{v}}, {\bf{B}}-\overset{\rightarrow}{e_y}, \partial_yv_2)\|_{[m/2]+3}^2\\
&+\int_0^t\left(	\|(p-1, {\bf{v}}, {\bf{B}}-\overset{\rightarrow}{e_y}, \partial_yv_2)\|_{[m/2]+4}^2+	\|\partial_y(p, {\bf{v}}, {\bf{B}}-\overset{\rightarrow}{e_y}, \partial_yv_2)\|_{[m/2]+3}^2\right) \;ds\\
\lesssim&\;\mathcal{P}(N_m(0))+t\mathcal{P}(N_m(t)).
\end{align*}
On the other hand, by Sobolev embedding, we also have
\begin{align*}
\varepsilon\|(\partial_yv_1, \partial_yb_1)\|_{2, \infty}^2
&\lesssim\varepsilon\sum_{|\alpha|\leq 2}\sup_{0\leq s\leq t}\|\mathcal{Z}^\alpha(\partial_yv_1, \partial_yb_1)(s)\|_{L_{x}^\infty L_y^2}\|\mathcal{Z}^\alpha(\partial_y^2v_1, \partial_y^2b_1)(s)\|_{L_{x}^\infty L_y^2}\\
&\lesssim  \varepsilon \left[\|(\partial_yv_1, \partial_yb_1)(0)\|_{4}+\left(\int_0^t\|(\partial_yv_1, \partial_yb_1)\|_{4}^2\; ds\right)^\frac12\right]\notag\\
&\cdot\left[\|(\partial_y^2v_1, \partial_y^2b_1)(0)\|_{4}+\left(\int_0^t\|(\partial_y^2v_1, \partial_y^2b_1)\|_{4}^2\; ds\right)^\frac12\right]\notag\\
\lesssim&\|(\partial_yv_1, \partial_yb_1)(0)\|_{4}^2+  \varepsilon^2\|(\partial_y^2v_1, \partial_y^2b_1)(0)\|_{4}^2+ \int_0^t\|(\partial_yv_1, \partial_yb_1)\|_{4}^2 \;ds \notag\\
&+\varepsilon^2\int_0^t\|(\partial_y^2v_1, \partial_y^2b_1)\|_{4}^2 \;ds
\end{align*}
Thus, for any $m\geq 9$,  by inserting the above inequalities into \eqref{5.1},   we obtain
\begin{equation*}
\begin{split}
& N_m(t)+ \varepsilon(2\mu+\lambda)\gamma^{-1} \sum_{|\alpha|+i\leq m\atop i=1,2}\int_{\mathbb{R}_+^2}
 p^{-1}(t)|\mathcal{Z}^\alpha \partial_y^i p(t)|^2 \; d{\bf{x}}\\
\lesssim&\;\;\mathcal{P}(N_m(0))+[t+\varepsilon(2\mu+\lambda)]\mathcal{P}(N_m(t)).
\end{split}
\end{equation*}
Let  the time $t$ and $\varepsilon$ be suitablely small,  then we achieve that
\begin{align*}
N_m(t)+ \varepsilon(2\mu+\lambda)\gamma^{-1} \sum_{|\alpha|+i\leq m\atop i=1,2}\int_{\mathbb{R}_+^2}
 p^{-1}(t)|\mathcal{Z}^\alpha \partial_y^i p(t)|^2 \;d{\bf{x}}
\lesssim\mathcal{P}(N_m(0)).
\end{align*}	
Based on the above uniform conormal estimates achieved, the inviscid limit in Theorem \ref{Th1} can be verified as in \cite{Mas-Rou1}.

	\end{document}